\newcommand{\stkout}[1]{\ifmmode\text{\sout{\ensuremath{#1}}}\else\sout{#1}\fi}
\def\@url#1{{\tt\def~{\lower3.5pt\hbox{\char'176}}\def\_{\char'137}#1}}
\let\fullref\autoref
\def\makeautorefname#1#2{\expandafter\def\csname#1autorefname\endcsname{#2}}
                   \let\c@lemma\c@theorem
\newtheorem{thm}{Theorem}[section]
\newtheorem{cor}{Corollary}[section]
\newtheorem{prop}{Proposition}[section]
\newtheorem{lem}{Lemma}[section]
\newtheorem*{thmbig}{Theorem}
\theoremstyle{definition}
\newtheorem{rem}{Remark}[section]
\let\c@lem=\c@thm
\let\c@theorem=\c@thm
\let\c@notation=\c@thm
\let\c@cor=\c@thm
\let\c@prop=\c@thm
\let\c@lem=\c@thm
\let\c@defn=\c@thm
\let\c@exmps=\c@thm
\let\c@rem=\c@thm
\let\c@warn=\c@thm
\let\c@claim=\c@thm
\let\c@conj=\c@thm
\let\c@quest=\c@thm
\numberwithin{equation}{section}
\def\quickop#1{\expandafter\newcommand\csname #1\endcsname{\operatorname{#1}}}
\newcommand{\F}{\mathbb{F}}
\newcommand{\G}{\mathbb{G}}
\newcommand{\W}{\mathbb{W}}
\newcommand{\Q}{\mathbb{Q}}
\newcommand{\smsh}{\wedge}
\newcommand{\xra}{\xrightarrow}
\newcommand{\ZZ}{\mathbb{Z}}
\newcommand{\FF}{\mathbb{F}}
\DeclareFontFamily{OMS}{rsfs}{\skewchar\font'60}
\DeclareFontShape{OMS}{rsfs}{m}{n}{<-5>rsfs5 <5-7>rsfs7 <7->rsfs10 }{}
\DeclareSymbolFont{rsfs}{OMS}{rsfs}{m}{n}
\DeclareSymbolFontAlphabet{\scr}{rsfs}
\def\makeop#1{\expandafter\def\csname #1\endcsname{\mathop{\mathrm{#1}}\nolimits}}
\def\FF{\mathbb{F}}
\def\GG{\mathbb{G}}
\def\SS{\mathbb{S}}
\def\WW{{{\mathbb{W}}}}
\def\ZZ{{{\mathbb{Z}}}}
\def\Z{{{\mathbb{Z}}}}
\def\Ext{\mathrm{Ext}}
\definecolor{darkspringgreen}{rgb}{0.09, 0.45, 0.27}
\newcommand{\la}{\Delta}  
\newcommand{\lb}{b} 
\newcommand{\lc}{\overline{b}} 
\newcommand{\ld}{\overline{\Delta}} 
\newcommand{\bock}[1]{\delta^{(#1)}}
\DeclareRobustCommand\circled[1]{\tikz[baseline=(char.base)]{
   \node[shape=circle,draw,inner sep=0pt] (char) {#1};}}
\newcommand{\cvone}{(v_1^2)}
\newcommand{\cvonevtwo}{(v_1v_2)}
\newcommand{\cvtwo}{(v_2^2)}
\newcommand{\cj}{(u_1^3)}
\title{The $\alpha$-Family in the $K(2)$-Local Sphere at the Prime $2$}
\date{\today}
\author[A. Beaudry]{Agn\`es Beaudry}
\address{Department of Mathematics\\ University of Colorado at Boulder \\ \newline Campus Box 395 \\ Boulder \\ Colorado \\ 80309}
\thanks{This material is based upon work supported by the National Science Foundation under Grant No.~DMS-1725563.}
\begin{document}
\maketitle
\begin{abstract}
In this note, we compute the image of the $\alpha$-family in the homotopy of the $K(2)$-local sphere at the prime $p=2$ by locating its image in the algebraic duality spectral sequence. This is a stepping stone for the computation of the homotopy groups of the $K(2)$-local sphere at the prime $2$ using the duality spectral sequences.
\end{abstract}
\setcounter{tocdepth}{1}
\tableofcontents

\subsection*{Acknowledgements}
This note was born in conversations which happened during the writing of \cite{BGH} and the author is indebted to her collaborators, Paul Goerss and Hans-Werner Henn. Many of the methods and ideas used here are borrowed from that rich collaboration. She also thanks Zhouli Xu for useful conversations related to this topic. 

Mark Mahowald knew how the $\alpha$-family would be detected in the duality spectral sequences and this paper makes his sketches precise. Some of the computations used in the proof of this theorem are also closely related to results of Mahowald and Rezk in \cite{MR}.

\section{Introduction}

The first periodic family in the homotopy groups of spheres was constructed by Adams in his study of the image of the $J$ homomorphism, which culminated in what is now one of the must-read articles in algebraic topology, \emph{On the Groups $J(X)$ -- IX} \cite{AdamsJIX}. In the last section of this paper, Adams uses self-maps of Moore spaces to construct elements of the homotopy groups of spheres that he denotes by $\alpha$. These elements are intimately related to $K$-theory and are part of what is now called the ``$\alpha$-family''. 

The $\alpha$-family is one of the few computable families of elements in the stable homotopy groups of spheres. It is the first of its kind and, with its successors the $\beta$ and $\gamma$-families, it now belongs to a collection of classes known as the Greek-letter elements. In their cornerstone paper on periodicity in the Adams-Novikov Spectral Sequence, Miller, Ravenel and Wilson \cite{MRW} give an intimate connection between the Greek-letter elements and the chromatic spectral sequence, and establish the importance of the chromatic point of view for computations of the homotopy groups of spheres. 

Chromatic homotopy as it is known today comes from Morava's insight that there should be higher analogs of $p$-completed $K$-theory. They should carry higher Adams operations, and detect periodic families which are generalizations of the image of $J$. These cohomology theories are called the Morava $E$-theories $E_n$ and the associated mod $p$ theories are called the Morava $K$-theories $K(n)$. The higher Adams operations form a group called the Morava stabilizer group, denoted $\GG_n$. The theories $E_n$ and $K(n)$ are complex oriented ring spectra whose construction is based on the deformation theory of height $n$ formal groups. 

The Morava $E$ and $K$-theories detect periodic families of elements in the homotopy groups of spheres. There are various ways to make this precise. One is through the eyes of Bousfield localization. The Chromatic Convergence Theorem of Hopkins and Ravenel states that the $p$-local sphere spectrum $S_{(p)}$ is the (homotopy) inverse limit of the Bousfield localizations $S_{E_n}$ of the sphere at the Morava $E$-theories. One then studies $S_{(p)}$ through its images under the natural maps $S_{(p)} \to S_{E_n}$. Further, the $S_{E_n}$ can be inductively reassembled from the localizations at the Morava $K$-theories via a homotopy pull-back
\[\xymatrix{ S_{E_n} \ar[r]  \ar[d] & S_{K(n)} \ar[d]  \\
 S_{E_{n-1}} \ar[r]  & (S_{K(n)})_{E_{n-1}} .}\]

These facts highlight the importance of computing  $\pi_*S_{E_n}$ and $\pi_*S_{K(n)}$.  The standard tools for computing these homotopy groups are two closely related spectral sequences. Note that the $E_n$-local sphere is equivalent to $S_{E(n)}$, where $E(n)$ is the Johnson-Wilson spectrum, a ``thiner'' version of $E_n$. The $E(n)$-Adams-Novikov Spectral Sequence computes the homotopy groups of $S_{E(n)} \simeq S_{E_n}$:
\[\Ext_{E(n)_*E(n)}^{*,*}(E(n)_*, E(n)_*) \Longrightarrow \pi_{*}S_{E(n)} .\]
The second spectral sequence is the $K(n)$-local $E_n$-Adams-Novikov Spectral Sequence, which computes the homotopy groups of $S_{K(n)} \simeq E_n^{h\G_n}$. Its $E_2$-term can be identified with continuous cohomology groups:
\[H^*(\G_n, (E_n)_*) \Longrightarrow \pi_{*}S_{K(n)} .\]

We give an overview of what is known. First $S_{K(0)}$ and $S_{E_0}$ are both the rational sphere $S_{\Q}$. The computation of $\pi_*S_{K(1)}$ and $\pi_*S_{E_1}$ can be obtained from the classical computations of Adams, Atiyah and others on the image of $J$ and the action of the Adams operations. The computation of $\pi_*S_{E_2}$ and $\pi_*S_{K(2)}$ are entirely different beasts. Shimomura, Wang and Yabe have done extensive work on computing these homotopy groups at various primes. The case $p\geq 5$ is treated in \cite{shimyab} and is also nicely presented in \cite{behse2}. The case $p=3$ is treated in \cite{shimwangp3, shimmoore3} and the case $p=2$ is partially treated in \cite{shimwangp2, shimmoore2}.

The height two computations are extremely difficult and the answers contain an enormous amount of information that is hard to interpret and analyze. Having multiple point of views seems to have become an imperative for our understanding of chromatic height two phenomena. 

In \cite{ghmr}, Goerss, Henn, Mahowald and Rezk establish a different approach to height two computations. It relies on resolutions of the $K(2)$-local sphere called the \emph{duality resolutions}, from which one obtains various spectral sequences. For certain subgroups $G$ of $\GG_2$, the \emph{topological duality spectral sequences} converge to $\pi_*E_2^{hG}$ and the \emph{algebraic duality spectral sequences} converge to $H^*(G, (E_2)_*)$.

The advantage of the duality spectral sequences is that they organize the computations and the answers in a systematic way.  For $p\geq 5$, these methods are used in 
\cite{lader}, for $p= 3$, in \cite{HKM} and for $p=2$, in \cite{beaudrytowards} to perform computations for the $K(2)$-local Moore spectrum. The homotopy of $\pi_*S_{K(2)}$ at $p=3$ has been analyzed by Goerss, Henn, Karamanov, Mahowald using duality methods, but has not been fully recorded yet. 

Duality spectral sequence techniques are also being used to solve other central problems in chromatic homotopy theory. They have been crucial in the study of the Chromatic Splitting Conjecture \cite{GoerssSplit, BGH} at $p=2$ and $p=3$. In particular, they play a central role in the disproof of the strongest form of the conjecture at $p=2$ \cite{BGH}.
The computations of the $K(2)$-local Picard groups and of the Gross-Hopkins dual of the sphere at the prime $p=3$ rely on the duality spectral sequences  \cite{GHMRPIC,GHBC}. These are currently being adapted by the author and her collaborators to solve the same problems at $p=2$. Finally, Bhattacharya and Egger use the duality techniques to compute the homotopy groups of the first example of a type $2$ complex with a $v_2^1$ self-map \cite{BEZ}.

The current paper is concerned with computations of $\pi_*S_{K(2)}$ at $p=2$ using duality spectral sequence techniques and we finish the introduction by stating our result. When computing $\pi_*S_{K(2)}$, a first and essential step is to locate the $\alpha$-family in the computation. The goal of this paper is to do this at $p=2$, using the duality techniques. The results in this paper are a stepping stone for a full computation of $\pi_*S_{K(2)}$ using the duality spectral sequences. We will recall the precise definition of the $\alpha$-family in \fullref{sec:alphafamiliy}. We will define the algebraic duality spectral sequence and the subgroup $\SS_2^1 \subseteq \GG_2$ in \fullref{sec:tools}. Our main results are summarized in the following statement.

\begin{thmbig}
Let $p=2$. The elements $\alpha_{i/j} \in \Ext_{BP_*BP}^{1,2i}(BP_*,BP_*)$ map non-trivially to $H^1(\SS_2^1, (E_2)_{2i})$. 
In the algebraic duality spectral sequence
\[E_1^{p,q,t} = H^q(F_p, (E_2)_t) \Longrightarrow H^{p+q}(\SS_2^1, (E_2)_t) \] 
the $\alpha$s are detected as follows:
\begin{enumerate}[(a)]
\item $\alpha_{2/2} \in E_1^{0,1,4}$
\item $\alpha_{i/1} \in E_1^{0,1,2i}$ if $i\geq 1$ is odd.
\item $\alpha_{i/j} \in E_1^{1,0,2i}$ if $i$ is even.
\end{enumerate}
The maps
\[H^1(\GG_2, E_{t}) \to  H^1(\SS_2^1, (E_2)_{t})\]
in degrees $t\neq 0$ are injective so that the image of the $\alpha_{i/j}$ have unique lifts in $H^1(\GG_2, E_{2i})$. 

In the spectral sequence
\[H^*(\G_2, (E_2)_*) \Longrightarrow \pi_{*}S_{K(2)} \]
the $\alpha$-family supports the standard pattern of differentials and the family of elements detected by the $\alpha$s in $\pi_*S$ maps non-trivially to $\pi_*S_{K(2)}$. The same holds in $\pi_*E_2^{h\SS_2^1}$ and the associated homotopy fixed point spectral sequence.
\end{thmbig}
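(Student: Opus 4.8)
The plan is to reduce the whole statement to explicit computations in the cohomology of the finite subgroups $F_p$ appearing in the algebraic duality resolution, exploiting that the relevant groups carry the image-of-$J$ pattern made explicit by Mahowald--Rezk \cite{MR}. First I would fix the composite $\Ext^{1,2i}_{BP_*BP}(BP_*,BP_*)\to H^1(\GG_2,(E_2)_{2i})\to H^1(\SS_2^1,(E_2)_{2i})$ and, using the chromatic description of the $\alpha_{i/j}$ recalled in \fullref{sec:alphafamiliy}, describe the image of each generator in terms of its height-one ($v_1$-periodic) data. The position in the algebraic duality spectral sequence is then read off from the edge homomorphism, which is restriction to the maximal finite subgroup $F_0$: a class of $H^1(\SS_2^1,(E_2)_{2i})$ has filtration $0$, detected in $E_1^{0,1,2i}=H^1(F_0,(E_2)_{2i})$, exactly when its restriction to $F_0$ is non-zero, and otherwise has filtration $1$ and is detected in $E_1^{1,0,2i}=H^0(F_1,(E_2)_{2i})$. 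So proving (a)--(c) amounts to deciding, case by case in $i$, whether this restriction survives; non-triviality in $H^1(\SS_2^1,(E_2)_{2i})$ then comes for free, since in each case the class is identified with a non-zero permanent cycle of the spectral sequence.

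The heart of the argument, and the step I expect to be hardest, is the evaluation of these restrictions from the explicit structure of $H^*(F_0,(E_2)_*)$. As $F_0$ is the maximal finite subgroup, its cohomology with $(E_2)_*$-coefficients carries exactly the image-of-$J$ pattern of the $\pi_* tmf$/$EO_2$ computation, and I would use the identification in \cite{MR} to match each $v_1$-periodic generator against the cyclic factors of $H^1(F_0,(E_2)_{2i})$. The parity dichotomy should emerge from the interplay between the index $j$ and the valuation $\nu_2(i)$: for odd $i$ the order-two class $\alpha_{i/1}$ restricts non-trivially to $F_0$ and sits in filtration $0$, whereas for even $i$ the relevant generator is divisible enough that its restriction to $F_0$ vanishes and it is pushed onto the $E_1^{1,0}$-line; the exceptional class $\alpha_{2/2}$ is the low-degree edge case, lying below the first $v_2$-periodicity, where the periodic generator still appears in filtration $0$. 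Keeping simultaneous track of the $2$-adic valuations, the Galois action, and the $v_2$-periodicity operators, and packaging the outcome as a uniform statement in $i$, is where the real work lies.

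For the injectivity of $H^1(\GG_2,(E_2)_t)\to H^1(\SS_2^1,(E_2)_t)$ I would run the Lyndon--Hochschild--Serre spectral sequence for $\SS_2^1\lhd\GG_2$. Its five-term exact sequence identifies the kernel of restriction on $H^1$ with the inflation image of $H^1(\GG_2/\SS_2^1, H^0(\SS_2^1,(E_2)_t))$, so it suffices to show $H^0(\SS_2^1,(E_2)_t)=0$ for $t\neq 0$. This is part of the computation of $H^0(\SS_2^1,(E_2)_*)$ recorded in \fullref{sec:tools}, which is concentrated in $t=0$; indeed the failure of injectivity in degree $t=0$ is precisely the reduced-norm class $\zeta$, which restricts to zero on $\SS_2^1$, so the exclusion of $t=0$ in the statement is exactly the right hypothesis. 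Injectivity then forces the $\GG_2$-lifts of the $\alpha_{i/j}$ to be unique.

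Finally, because each $\alpha_{i/j}$ is the image of a genuine class of $\pi_* S$, its image in the $K(2)$-local $E_2$-Adams--Novikov spectral sequence for $\GG_2$, and likewise in the homotopy fixed point spectral sequence for $\SS_2^1$, is a permanent cycle; these classes can therefore only support or receive differentials compatible with the behaviour of the $\alpha$-family in $\pi_* S$. I would pin down the ``standard pattern'' by transporting the familiar $v_1$-periodic differentials of the $K(1)$-local and $tmf$ settings along the restriction to $F_0$, and then deduce non-triviality in $\pi_* S_{K(2)}$ from the fact that the detecting classes located in the earlier steps survive to non-zero classes on $E_\infty$ and cannot be boundaries, so the underlying homotopy elements are non-trivial. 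The identical comparison, carried out for the restriction to $\SS_2^1$, yields the corresponding statement for $\pi_* E_2^{h\SS_2^1}$ and its homotopy fixed point spectral sequence.
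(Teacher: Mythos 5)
Your overall architecture is reasonable and parts of it are genuinely sound: the reading of the filtration via the edge homomorphism (a class in $H^1(\SS_2^1,E_{2i})$ restricts non-trivially to $F_0$ if and only if it is detected on the $E_1^{0,1,2i}$-line, and otherwise is pushed to $E_1^{1,0,2i}$) matches the paper, and your injectivity argument via the five-term exact sequence of the Lyndon--Hochschild--Serre spectral sequence for $\SS_2^1 \lhd \GG_2$, reducing to $H^0(\SS_2^1,E_t)=0$ for $t\neq 0$, is a clean and correct alternative to the paper's two-step route (first descending from $\GG_2$ to $\GG_2^1$ via the $(\pi-1)$-long exact sequence, then Galois descent to $\SS_2^1$).

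However, there is a genuine gap at what you yourself flag as the heart of the matter, and your framework as stated cannot close it. For even $i$, knowing that the restriction of $\alpha_{i/j}$ to $F_0=G_{24}$ vanishes only tells you the class has positive filtration; it does not tell you that the class is \emph{non-zero} in $H^1(\SS_2^1,E_{2i})$, that it has order exactly $2^j$, or which element of $E_1^{1,0,2i}=H^0(C_6,E_{2i})$ detects it. So non-triviality does not ``come for free,'' and matching against the Mahowald--Rezk description of $H^*(F_0,E_*)$ cannot decide it, since the answer lives on the $F_1=C_6$ column. What is actually needed --- and what the paper supplies --- is (i) explicit $\GG_2$-invariant lifts $y_{i/j}$ of $v_1^i$ modulo $2^j$, namely $v_1^2$, $c_4^{2^k s}$ and $c_6 c_4^{(s-3)/2}$, whose invariance requires computing the action of $\SS_2$ on $c_4$ and $c_6$ to leading order; and (ii) the identification of $\delta^{(j)}(y_{i/j})$ with the class $\bigl[d_1(y_{i/j})/2^j\bigr]$ in $H^1(H^0(F_\bullet,E_{2i}),d_1)$, where $d_1=1-\alpha$, together with the explicit evaluation of $d_1$ on $c_4^n$ and $c_6c_4^n$. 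Without this mechanism the statements (c), the orders of the classes, and even their non-vanishing are unproved. A secondary gap: in the last step you assert the detecting classes ``cannot be boundaries'' in the $K(2)$-local spectral sequence but give no reason; the needed input is that the relevant classes sit in filtration at most $3$, the first possible differential is a $d_3$, and the $0$-line $H^0(\GG_2,E_*)\cong\Z_2$ (respectively $H^0(\SS_2^1,E_*)\cong\W$) consists of permanent cycles, so nothing can hit them.
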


\section{The $\alpha$-family in the Adams-Novikov Spectral Sequence}\label{sec:alphafamiliy}
In this section, we review the construction of the $\alpha$-family and fix notation. We let $S = S_{(2)}$ and $BP$ be the $2$-local Brown-Peterson spectrum.
The Adams-Novikov Spectral Sequence is given by
\[E_2^{s,t} = \Ext_{BP_*BP}^{s,t} (BP_*, BP_*X) \Longrightarrow  \pi_{t-s} X_{(2)}.\]
The $\alpha$-family is a collection of elements $\alpha_{i/j} \in \Ext_{BP_*BP}^{1,*}(BP_*, BP_*)$ which we construct below. 

\begin{rem}We also call the collection of non-trivial elements of $\pi_*S$ detected by the $\alpha$s the $\alpha$-family, or the \emph{topological} $\alpha$-family when we wish to make the distinction clear.
\end{rem}

To define the $\alpha$-family, one first shows that there is an isomorphism
\[ \Ext^{0,\ast}_{BP_\ast BP}(BP_\ast,BP_\ast/2) \cong \FF_2[v_1]. \]
See for example Theorem 4.3.2 of \cite{ravgreen}.
The $\alpha$-family in $\Ext^{1,\ast}_{BP_\ast BP}(BP_\ast,BP_\ast)$ is defined by taking the image of
the powers of $v_1$ under various Bockstein homomorphisms. 
Define
$x_{1,n} \in v_1^{-1}BP_{2^{n+1}} $
by
\begin{align*}
x_{1,n} = \begin{cases} v_1 & n=0 \\
 v_1^2-4v_1^{-1}v_2 &n=1 \\
 v_1^4-8v_1v_2  & n=2 \\
 x_{n-1}^2 & n\geq 3.
 \end{cases}
\end{align*}
For $s\geq 1$ an odd integer,
the reduction of $x_{1,n}^s$ modulo $2$ is an element of $BP_{2^{n+1}s}/2$ congruent to $v_1^{2^ns}$. Furthermore, 
\[ x^s_{1,n} \in \begin{cases} BP_{2}/2 & \text{$n=0$ and $s\geq 1$} \\
BP_{4}/4 & \text{$n=1$ and $s=1$} \\
BP_{2^{n+1}s}/2^{n+2}& \text{$n\geq 2$, or $n=1$ and $s\geq 3$} 
\end{cases}\]
are comodule primitives. 

Let
\[\bock{n}\colon \Ext^{0,t}_{BP_\ast BP}(BP_\ast,BP_\ast/2^{n}) \to \Ext^{1,t}_{BP_\ast BP}(BP_\ast,BP_\ast) \] 
be the connecting Bockstein homomorphism 
associated to the short exact sequence 
\begin{align*}
\xymatrix@C=25pt{
0 \ar[r] &  BP_* \ar[r]^-{\times 2^{n}} & BP_* \ar[r] & BP_*/2^{n} \ar[r] & 0 \ .
}
\end{align*}
Keeping the convention that $s\geq 1$ is an odd integer, there are classes 
\[\alpha_{i/j} \in \Ext_{BP_*BP}^{1,2i}(BP_*,BP_*)\]
of order $2^j$ defined by
\begin{align*}
\alpha_{s/1} &= \bock{1}(x_{1,0}^s),  \\ 
\alpha_{2/2} &= \bock{2}(x_{1,1}), 
\end{align*}
and
\begin{align*}
 \alpha_{2^{n}s/(n+2)} &=  \bock{2^{n+2}}(x_{1,n}^s) 
\end{align*}
for $n\geq 2$ and $s \geq 1$, or for $n=1$ and $s\geq 3$. We usually abbreviate $\alpha_{i} = \alpha_{i/1}$.

Note that $\alpha_1 \alpha_{2/2}=0$ and otherwise
\[\alpha_1^k \alpha_{i/j} \neq 0\]
for all $k\geq 0$. There are differentials
\begin{align*}
d_3(\alpha_{i/j}) &= \begin{cases} \alpha_1^4 & \text{$i=j=1$} \\
\alpha_1^3 \alpha_{4k+1} & \text{$i=4k+3$ and $j=1$} \\
\alpha_1^3 \alpha_{2^ns/n+2} & \text{$i=2^ns+2$ and $j=3$}.
\end{cases}
\end{align*}
We obtain the pattern in \fullref{fig:BP}, which is also in Table 2 of \cite{ravnovice}.
\begin{figure}
\center
\includegraphics[width=\textwidth]{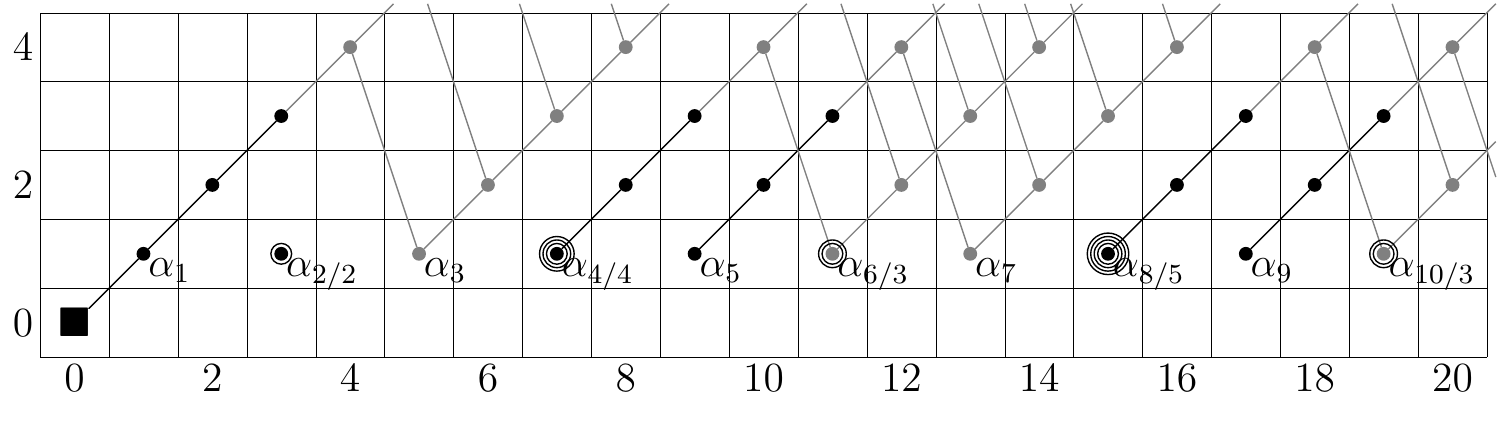}
\includegraphics[width=\textwidth]{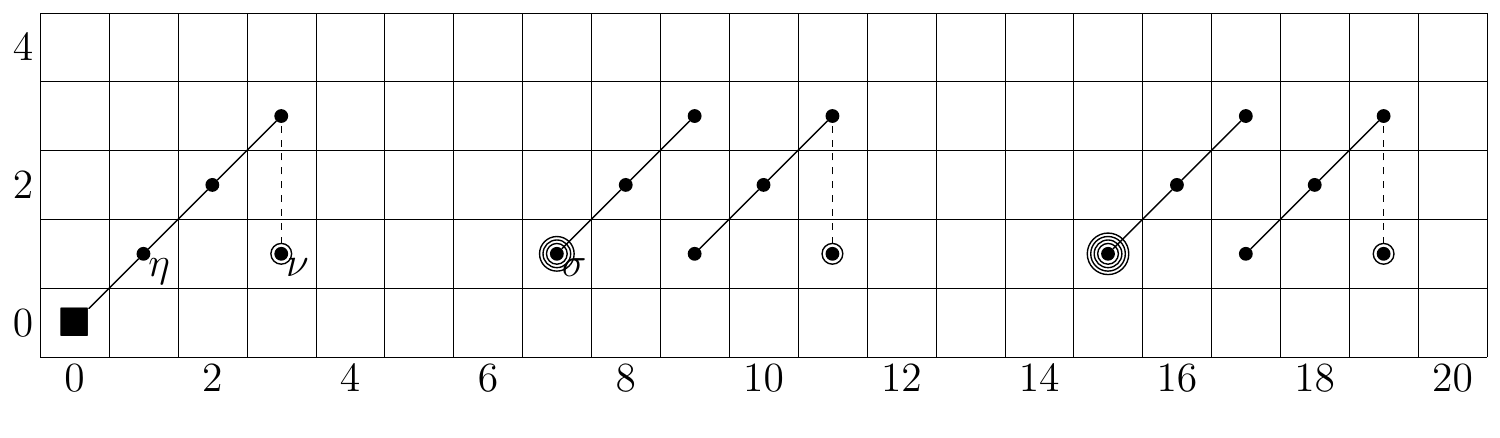}
\captionsetup{width=\textwidth}
\caption{The $\alpha$-family in the $E_2$ (top) and $E_{\infty}$ (bottom) pages of the Adams-Novikov Spectral Sequence. Here, a $\blacksquare$ denotes a copy of $\Z_2$, a $\bullet$ denotes a copy of $\Z/2$, a 
\circled{$\bullet$} a copy of $\Z/4$ and so on. Dashed lines denote exotic multiplications by $2$.}
\label{fig:BP}
\end{figure}

\section{Subgroups of $\G_2$ and the algebraic duality spectral sequence}\label{sec:tools}
Before turning to the computation of the $\alpha$-family in the $K(2)$-local sphere, we recall some of the tools used in the computation. This will be brief, but we refer the reader to \cite{beaudryresolution, beaudrytowards, BGH} where these techniques were explained in great detail.

We let $K(2)$ refer to the $2$-periodic Morava $K$-theory spectrum whose formal group law is that of the super-singular elliptic curve defined over $\F_4$
with Weierstrass equation 
\[C_0 : y^2+y=x^3.\]
The homotopy groups of $K(2)$ are given by
\[K(2)_*= \F_4[u^{\pm 1}]\]
for $u$ in degree $-2$. We let $E=E_2$ be the associated Morava $E$-theory constructed in \cite[Section 2]{beaudrytowards}, chosen so that the formal group law of $E$ is that of the universal deformation of $C_0$ with Weierstrass equation
\[C : y^2+ 3u_1xy+(u_1^3-1)y=x^3.\]
Its homotopy groups are
\[ E_* = \W[\![ u_1]\!][u^{\pm1}]\]
where $u_1$ is in $E_0$ and $u$ is in $E_{-2}$. Here $\W = W(\F_4)$ is the ring of Witt vectors on $\F_4$. We choose a primitive third root of unity $\omega$ and note that $\W \cong \Z_2[\omega]/(1+\omega + \omega^2)$. This is a complete local ring with residue field $\F_4$. In fact, it is the ring of integers in an unramified extension of degree $2$ of $\Q_p$.
The Galois group $\Gal = \Gal(\F_4/\F_2)$, whose generator we denote by $\sigma$, acts on $\W$ by the $\Z_2$-linear map determined by $\omega^{\sigma} = \omega^2$. Further, the Teichm\"uller lifts give a natural embedding of $\F_4^{\times} \subseteq \W^{\times}$. 

We let $\SS_2$ be the group of automorphisms of the formal group law of $K(2)$. The group $\SS_2$ is isomorphic to the units in a maximal order $\mathcal{O}$ of a division algebra of dimension $4$ over $\Q_2$ and Hasse invariant $1/2$. A presentation for $\mathcal{O}$ is given by
\[ \mathcal{O}  \cong \W\langle T\rangle / (T^2=-2, aT=Ta^{\sigma}), \ \ a\in \W.\]
It follows that an element of $\gamma \in \SS_2$ can be written as power series
\[ \gamma= \sum_{i \geq 0} a_i(\gamma) T^i\]
where the elements $a_i(\gamma) \in\W$ satisfy $a_i(\gamma)^4-a_i(\gamma)=0$ and $a_0(\gamma) \neq 0$. The Galois group $\Gal = \Gal(\F_4/\F_2)$ acts on $\mathcal{O}$ via its action on $\W$, fixing $T$. We let $\GG_2$ be the extension of $\SS_2$ by $\Gal$, so that
\[\GG_2 = \SS_2 \rtimes \Gal.\]

The right action of $\SS_2$ on $\mathcal{O}$ gives rise to a representation $\SS_2 \to GL_2(\W)$ whose determinant restricts to a homomorphism
\[  \det \colon \SS_2 \to \Z_2^{\times} . \]
We can extend the determinant to $\GG_2$ by $\det(x, \sigma) = \det(x)$. The determinant composed with the projection to $\Z_2^{\times}/(\pm 1) \cong \Z_2$ defines a homomorphism of $\GG_2$ onto $\ZZ_2$. For any subgroup $G \subseteq \GG_2$, we let $G^1$ be the kernel of this composite. If $G$ is $\SS_2$ or $\GG_2$, this is a split surjection so that
\begin{align}\label{eq:SS1}
\SS_2 &\cong \SS_2^1 \rtimes \ZZ_2  & \GG_2 \cong \GG^1_2 \rtimes \ZZ_2 
\end{align}
We will use the map $\ZZ_2 \to \GG_2$ which send a chosen generator if $\Z_2$ to $\pi = 1+2\omega \in \W^{\times}$ as the preferred splitting. 

The group $\SS_2$ has the following important subgroups. First, it has a unique conjugacy class of maximal finite subgroup. A representative can be chosen to be the image of the automorphisms of the super-singular curve $\Aut(C_0)$, which we will denote by $G_{24}$. It is the semi-direct product of a quaternion group with the natural copy of $\F_4^{\times}$ in $\SS_2$. The group $C_6 = (\pm 1) \times \F_4^{\times}$ is a subgroup of $G_{24}$. Note that the torsion is contained in $\SS^1_2$ as $\Z_2$ is torsion free. So these are in fact subgroups of $\SS^1_2$. However, we note that in $\SS^1_2$ the groups $G_{24}$ and $G_{24}' = \pi G_{24} \pi^{-1}$ are not conjugate ($\pi \not\in \SS^1_2$). Finally, the Galois group acts on these finite subgroups and they can all be extended to corresponding subgroups of $\G_2$. The maximal finite subgroup of $\G_2$ is denoted by 
\[G_{48} \cong G_{24} \rtimes \Gal .\]

Next, we turn to the computational tools. 
For finite spectra $X$, 
\[X_{K(2)} \simeq E^{h\GG_2}\smsh X \simeq (E\smsh X)^{h\G_2} \] 
and there is a spectral sequence
\[E_2^{s,t} = H^s(\GG_2, E_tX) \Longrightarrow \pi_{t-s}X_{K(2)}  \]
where, here and everywhere, we mean the continuous cohomology groups. Analyzing the $E_2$-term of this spectral sequence is difficult, so we often start by studying the cohomology of the subgroup $\SS_2^1$. We have an extremely concrete tool to compute the group cohomology of $\SS_2^1$, a spectral sequence called the \emph{algebraic duality spectral sequence} (ADSS), which we describe here.

For a graded profinite $\Z_2[\![\SS_2^1]\!]$-module $M$ (a typical example is $M = E_*X$), the algebraic duality spectral sequence for $M$ is a first quadrant spectral sequence:
\begin{align}\label{eq:ADSS}
E_1^{p,q,t} =E_1^{p,q,t}(M)  \cong H^q(F_p, M_t) \Longrightarrow H^{p+q}(\SS_2^1, M_t)\end{align}
with differentials $d_r \colon E_r^{p,q,t} \to E_r^{p+r,q-r+1,t}$, where $F_0 = G_{24}$, $F_1=F_2 = C_6$ and $F_3=G_{24}'$. We may omit the internal grading $t$ from the notation.

The spectral sequence has an edge homomorphism
\[H^{p}(H^0(F_{\bullet}, M_t)  ,d_1)\to H^{p}(\SS_2^1,M_t),\]
where $H^{p}(H^0(F_{\bullet}, M_t)  ,d_1)$ is the cohomology of the complex
\begin{equation}\label{eq:complex}
 \xymatrix{ 0 \ar[r] & H^0(F_{0}, M_t)  \ar[r]^-{d_1} & H^0(F_{1}, M_t)  \ar[r]^-{d_1}&H^0(F_{2}, M_t) \ar[r]^-{d_1}& H^0(F_{3}, M_t)  \ar[r] & 0,}
 \end{equation}
and $H^0(F_p, M_t) = E_1^{p,0,t} \cong M_t^{F_p}$. 

Central to the computations of this paper is the differential $d_1 \colon E_1^{0,0,t} \to E_1^{1,0,t}$, which we describe here. 
There is an element $\alpha \in \W^{\times} \subseteq \mathbb{S}_2$ which is defined so that $\alpha = 1+2\omega \mod (4)$ and $\det(\alpha) =-1$.\footnote{
At this point, we run into a conflict of notation. In the current trend of $K(2)$-local computations at $p=2$, the element named $\alpha$ plays a crucial and well-established role. We will keep the name, as any element of the $\alpha$-family has a subscript and this should make it easy to avoid confusion. }
It is shown in Theorem 1.1.1 of \cite{beaudrytowards} that the differential is induced the action of $1-\alpha$:
\[d_1=1-\alpha \colon H^0(F_0, E_*) \to H^0(F_1, E_*). \]

To compute with this spectral sequence, we will also need information about the cohomology $H^*(G_{24}, E_*)$ (which is isomorphic to $H^*(G_{24}', E_*) $) and of $H^*(C_6, E_*)$. This is all well-known, but nicely presented in Section 2 of \cite{BobkovaGoerss}. So we refer to that paper for the information we need.

Finally, for any subgroup $G$ of $\GG_2$, there is always a comparison diagram
\begin{equation*}
\xymatrix{
\Ext^{\ast,\ast}_{BP_\ast BP}(BP_\ast,BP_\ast X) \ar[d] \ar@{=>}[r] & \pi_\ast X \ar[d] \\
H^\ast(\GG_2,E_\ast X) \ar[d] \ar@{=>}[r] & \pi_\ast X_{K(2)} \ar[d] \\
H^\ast(G,E_\ast X) \ar@{=>}[r] & \pi_\ast (E^{hG}\smsh X).
}
\end{equation*}
To detect the $\alpha$-family in $\pi_\ast E^{hG}$, one studies the faith of the $\alpha$-family in $\Ext^{\ast,\ast}_{BP_\ast BP}(BP_\ast,BP_\ast)$ under the vertical maps.

\section{The $\alpha$-family in the $K(2)$-local sphere}
We finally turn to the computation of the $\alpha$-family in the $K(2)$-local sphere. The approach is as follows. We will identify the image of the $\alpha$-family under the map
\[ \Ext_{BP_*BP}^{*,*}(BP_*, BP_*) \to H^*(\GG_2, E_*). \]
In particular, we will show that all of the non-trivial classes $\alpha_1^k \alpha_{i/j}$ map non-trivially. For filtration reasons, this will imply that any class from the topological $\alpha$-family in $\pi_*S$ maps non-trivially to $\pi_*E^{h\GG_2}$.

We will need the following generalization of \cite[Proposition 3.2.2]{BGH}, which allows us to identify classes detecting the $\alpha$-family in the cohomology of certain closed subgroups of $\mathbb{G}_n$.
Its proof is completely analogous and is omitted here.
\begin{prop}\label{prop:detect}
Let $E=E_n$ and $H \subseteq \G_n$ be a closed subgroup. Let $R = H^0(H, E_0)$. Let $y_{i/j} \in E_{2i}/2^j$ be a class so that
\begin{enumerate}[(1)]
\item $y_{i/j} \equiv v_1^{2^i}$ modulo $2$,
\item $y_{i/j}$ is invariant under the action of $H$, and
\item $H^0(H, E_{2i}/2)$ is a cyclic $R$-module generated by $v_1^{i}$.
\end{enumerate}
Then, up to multiplication by a unit in $R$, the image of $\alpha_{i/j} \in \pi_{2i-1}E^{hH}$ is detected in the spectral sequence
\[H^s(H, E_t) \Longrightarrow \pi_{t-s} E^{hH}\]
by the class $\bock{j}(y_{i/j}) \in H^1(H, E_{2i})$.
\end{prop}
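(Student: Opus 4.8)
The plan is to reduce the assertion to a computation of the image of the \emph{algebraic} class $\alpha_{i/j}$ under the edge map of the comparison diagram, and then to pin down that image using the cyclicity hypothesis. First I would invoke the comparison diagram of spectral sequences with $X=S$, which maps the Adams--Novikov spectral sequence to $H^s(H,E_t) \Longrightarrow \pi_{t-s}E^{hH}$. Since the image of an $E_\infty$-permanent cycle under a map of spectral sequences is again a permanent cycle, the image of the homotopy class $\alpha_{i/j}$ in $\pi_{2i-1}E^{hH}$ is detected by the image of its Adams--Novikov representative, namely the image of the algebraic class $\alpha_{i/j} \in \Ext^{1,2i}_{BP_*BP}(BP_*,BP_*)$ under the edge homomorphism to $H^1(H,E_{2i})$ --- provided this image is nonzero, since it lies in filtration $1$ and so admits no incoming differentials. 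It therefore suffices to compute this image in $H^1(H,E_{2i})$ and to verify it is nonzero.

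By construction $\alpha_{i/j} = \bock{j}(x_{1,n}^s)$, the Bockstein of the comodule primitive $x_{1,n}^s \in \Ext^{0}_{BP_*BP}(BP_*,BP_*/2^j)$ (with $i = 2^n s$, and $j$ as in the definition). The connecting homomorphism $\bock{j}$ is natural for the comparison between the short exact sequences $0 \to BP_* \xrightarrow{2^j} BP_* \to BP_*/2^j \to 0$ and $0 \to E_* \xrightarrow{2^j} E_* \to E_*/2^j \to 0$, so the image of $\alpha_{i/j}$ equals $\bock{j}(\bar x)$, where $\bar x \in H^0(H,E_{2i}/2^j)$ denotes the image of $x_{1,n}^s$. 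Because comodule primitives map to $H$-invariant classes and $x_{1,n}^s \equiv v_1^{2^n s} = v_1^i$ modulo $2$ (with $v_1 \mapsto v_1$), the class $\bar x$ is an $H$-invariant lift of the generator $v_1^i \in H^0(H,E_{2i}/2)$ --- precisely the data carried by the hypothesized class $y_{i/j}$.

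The heart of the argument is then to compare the two invariant lifts $\bar x$ and $y_{i/j}$ of $v_1^i$ and to conclude that $\bock{j}(\bar x) = u\,\bock{j}(y_{i/j})$ for a unit $u\in R$. By hypothesis (3) both reduce to the cyclic generator $v_1^i$ modulo $2$, so their difference lies in the kernel of reduction, i.e.\ $\bar x - y_{i/j} = 2w$ for some $w$ pulled back along the injection $H^0(H,E_{2i}/2^{j-1}) \xrightarrow{\times 2} H^0(H,E_{2i}/2^j)$. The naturality ladder relating the order-$2^j$ and order-$2^{j-1}$ sequences identifies $\bock{j}(2w)$ with the lower Bockstein $\bock{j-1}(w)$, whence $\bock{j}(\bar x) = \bock{j}(y_{i/j}) + \bock{j-1}(w)$. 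The step I expect to be the main obstacle is to show that this correction term does not alter the detecting class beyond a unit multiple: using the order-$2^j$ hypotheses on $x_{1,n}^s$ and on $y_{i/j}$, the class $\bock{j}(y_{i/j})$ is of maximal order, and condition (3) forces the lower-order Bocksteins into its $R$-span, so that $\bock{j-1}(w) = (u-1)\,\bock{j}(y_{i/j})$ for a unit $u \equiv 1$ modulo $2$. This bookkeeping, together with the nonvanishing of $\bock{j}(y_{i/j})$ (which again follows from $y_{i/j}$ realizing the full order $2^j$), is exactly the calculation carried out in \cite[Proposition 3.2.2]{BGH}.

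Finally I would assemble the pieces. Lifting a unit through $R$ is legitimate because $E_0 = \W[\![u_1]\!]$, so $R = H^0(H,E_0)$ is a complete local ring in which $2$ lies in the maximal ideal; and $\bock{j}$ is $R$-linear, being a map of $H^*(H,E_*)$-modules. Hence $\bock{j}(\bar x) = u\,\bock{j}(y_{i/j})$, so up to the unit $u$ the image of $\alpha_{i/j}$ in $H^1(H,E_{2i})$ is $\bock{j}(y_{i/j})$. Combined with its nonvanishing, this identifies the filtration-$1$ class detecting the image of $\alpha_{i/j}$ in $\pi_{2i-1}E^{hH}$, as claimed.
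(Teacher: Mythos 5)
Your argument is correct and is essentially the proof the paper has in mind: the paper omits the proof entirely, deferring to \cite[Proposition 3.2.2]{BGH}, and your sketch (naturality of the Bockstein under $BP_*\to E_*$, comparison of the two invariant lifts of $v_1^i$ using the cyclicity hypothesis (3) and the ladder of coefficient sequences, and absorption of the correction term into a unit $u\equiv 1 \bmod 2$ of the complete local ring $R$) is exactly that argument. The one caveat is that the nonvanishing of $\bock{j}(y_{i/j})$ does not follow from the stated hypotheses alone and is checked separately in each application later in the paper, but this does not affect your identification of the image of the algebraic class as $u\,\bock{j}(y_{i/j})$.
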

One of the consequences of Theorem 1.2.2 of \cite{beaudrytowards} is the following lemma.
\begin{lem}
Let $H$ be a closed subgroup of $\GG_2$ which contains $\mathbb{S}_2^1$. Then
\[H^0(H, E_*/2) \cong \FF_4[v_1] .\]
\end{lem}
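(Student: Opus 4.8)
The plan is to establish the isomorphism $H^0(H, E_*/2) \cong \FF_4[v_1]$ in two stages: first compute the invariants for the largest relevant group $\SS_2^1$ (or its extension), then use the containment $\SS_2^1 \subseteq H$ to pin down the general case. The key structural input is the cited Theorem 1.2.2 of \cite{beaudrytowards}, which I expect describes $H^0(\SS_2^1, E_*/2)$ (or $H^0(\GG_2^1, E_*/2)$) explicitly. Recall that $E_*/2 = \F_4[\![u_1]\!][u^{\pm 1}]$ after reducing $\W$ to its residue field $\F_4$, and that $v_1 \equiv u_1 u^{-1}$ modulo $2$ in the chosen coordinates. The claim is essentially that the only invariant elements are the $\F_4$-linear combinations of powers of $v_1$.

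First I would reduce everything to a statement about a single group. Since $\SS_2^1 \subseteq H \subseteq \GG_2$, we have containments of invariants
\[
H^0(\GG_2, E_*/2) \subseteq H^0(H, E_*/2) \subseteq H^0(\SS_2^1, E_*/2).
\]
The strategy is to show that the outer two groups already agree and equal $\FF_4[v_1]$, which forces the middle term to equal them as well. The content of the cited theorem should give $H^0(\SS_2^1, E_*/2) \cong \FF_4[v_1]$; the key point is that reducing mod $2$ collapses the deformation parameter so that the invariants are controlled by the residual formal group, and the classes $v_1^k$ are manifestly invariant because they are comodule primitives (as used in the construction of the $\alpha$-family). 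For the lower bound I would verify that each $v_1^k$ is genuinely $\GG_2$-invariant mod $2$: this follows because $v_1$ is the Hurewicz image of a class in $\pi_* S$, hence its reduction is fixed by the entire stabilizer group action.

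The next step is to check that passing from $\SS_2^1$ up to $\GG_2$ does not shrink $\FF_4[v_1]$. The extra symmetries are the Galois action of $\Gal$ and the $\ZZ_2$-quotient coming from the determinant splitting \eqref{eq:SS1}. One must confirm that $v_1$ (and hence each $v_1^k$) is fixed by $\sigma$ and by the chosen generator $\pi = 1 + 2\omega$ of the $\ZZ_2$-factor. The Galois-invariance is clear since $v_1$ lives in the $\F_2$-subring and $\sigma$ acts $\F_2$-linearly; the invariance under the determinant direction follows because $v_1$ is an image from the sphere and is therefore fixed by all of $\GG_2$. This confirms $H^0(\GG_2, E_*/2) \supseteq \FF_4[v_1]$, and combined with the upper bound from the cited theorem gives equality throughout the sandwich.

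The main obstacle is not in the formal sandwiching argument but in correctly invoking the exact content of \cite[Theorem 1.2.2]{beaudrytowards}: one needs that theorem to supply the \emph{upper} bound $H^0(\SS_2^1, E_*/2) \subseteq \FF_4[v_1]$, i.e.\ that there are no unexpected invariants. If instead the cited result computes invariants for a slightly different group (say the full $\SS_2$ or $\GG_2^1$), I would insert a short comparison using the decomposition \eqref{eq:SS1} and the fact that the $\ZZ_2$-direction acts trivially mod $2$ on $v_1$-powers, so that the invariants cannot grow or shrink under the relevant restriction or inflation. Once the correct precursor statement is identified, the lemma is essentially an immediate corollary, which matches the author's remark that the proof is a direct consequence and is therefore stated without elaboration.
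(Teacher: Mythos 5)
The paper offers no argument for this lemma beyond the citation of Theorem 1.2.2 of \cite{beaudrytowards}, so you are reconstructing the intended deduction; your sandwich $H^0(\GG_2, E_*/2) \subseteq H^0(H, E_*/2) \subseteq H^0(\SS_2^1, E_*/2)$ is the natural frame, and the upper bound is exactly what the cited theorem supplies. The gap is in the lower bound. You assert $\FF_4[v_1] \subseteq H^0(\GG_2, E_*/2)$, but your own justification only shows that the elements $v_1^k$ themselves are invariant, hence only that $\FF_2[v_1] \subseteq H^0(\GG_2, E_*/2)$. The $\FF_4$-coefficients do not come along for free: $\Gal$ acts on $\W$ by $\omega^{\sigma}=\omega^2$, hence on $\FF_4 = \W/2 \subseteq E_0/2$ by Frobenius, so $\omega v_1^k$ is not Galois-invariant and $H^0(\GG_2, E_0/2)=\FF_2$, not $\FF_4$ (compare \fullref{cor:fixpoints}, where $H^0(\SS_2^1,E_0)\cong \W$ but $H^0(\GG_2^1,E_0)\cong \Z_2$). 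Your sandwich therefore only yields $\FF_2[v_1] \subseteq H^0(H, E_*/2) \subseteq \FF_4[v_1]$ and does not determine the middle term. The aside that $v_1$ is invariant because it is ``the Hurewicz image of a class in $\pi_*S$'' is also not the right mechanism; the correct reason, which you do mention, is that $v_1^k$ is a comodule primitive, equivalently $\gamma_*(v_1)\equiv v_1 \bmod 2$ from the explicit formulas for the action.

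The repair is to use that $\SS_2^1$ is normal in $\GG_2$ (it is the kernel of the composite of the determinant with the projection to $\Z_2$), so that $H^0(H,E_*/2) = \bigl(H^0(\SS_2^1,E_*/2)\bigr)^{H/\SS_2^1} = (\FF_4[v_1])^{H/\SS_2^1}$, and then to analyze the action of the generators of $H/\SS_2^1$ directly. The generator $\pi = 1+2\omega$ of the $\Z_2$-direction lies in $\W^{\times}\subseteq \SS_2$, acts by $\W$-algebra automorphisms and fixes $v_1$ mod $2$, so it acts trivially on $\FF_4[v_1]$; this proves the stated conclusion for every closed $H$ with $\SS_2^1 \subseteq H \subseteq \SS_2$. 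A Galois generator, however, acts by Frobenius on the coefficients, so for $H$ containing a Galois twist (e.g.\ $H=\GG_2^1$ or $\GG_2$) the invariants are $\FF_2[v_1]$ rather than $\FF_4[v_1]$, and no version of your sandwich can close there because the answer genuinely changes. Note that what is actually used downstream, condition (3) of \fullref{prop:detect}, is only that $H^0(H,E_{2i}/2)$ is cyclic over $R=H^0(H,E_0)$ on the generator $v_1^i$, and that does hold in every case: $\W\cdot v_1^i = \FF_4\{v_1^i\}$ when $R\cong\W$, and $\Z_2\cdot v_1^i=\FF_2\{v_1^i\}$ when $R\cong\Z_2$.
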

\noindent
Therefore, in any positive degree, $H$ satisfies the condition of \fullref{prop:detect} provided that it contains $\mathbb{S}_2^1$. So, to apply \fullref{prop:detect}, we must identify candidates for the classes $y_{i/j}$.

To construct these classes, recall that there are classical $G_{48}$-invariants in $E_0$ associated to the curve $C$, which play a key role in computations at $n=p=2$. Specifically, letting $v_1  = u_1 u^{-1}$ and $v_2=u^{-3}$ the following are invariant for the action of $G_{48}$:
\begin{align*}
\Delta &= 27v_2^3(v_1^3 - v_2)^3 \\
c_4&=9 (v_1^4+8v_1v_2)  \\
c_6&=27 \left(v_1^6-20 v_1^3v_2-8v_2^2\right) \\
j &= c_4^3 \Delta^{-1}.
\end{align*}

A few elements in the higher cohomology $H^*(G_{48}, E_*)$ will also appear in the computation. Namely, there are elements
\begin{align*}
\eta \in H^1(G_{48}, E_2) & &  \nu \in H^1(G_{48}, E_4) &   & \mu  \in H^1(G_{48}, E_6).
\end{align*} 
The classes $\eta$ and $\nu$ are chosen to be the images of $\alpha_1$ and $\alpha_{2/2}$ under the map
\[\Ext_{BP_*BP}^{*,*}(BP_*,BP_*) \to H^*(G_{48}, E_*).  \]
We choose the class $\mu$ to be the image of $\alpha_{3}$. This will be discussed in the proof of \fullref{thm:sood}. It has the property that $\mu = \eta v_1^2$ modulo $(2)$ and $\eta \Delta^{-1}c_6c_4^2$ is a unit multiple of $j \mu$. 

Note that $H^*(G_{24}, E_*) \cong H^*(G_{48}, E_*) \otimes_{\Z_p}\W$. The restriction $H^*(G_{48}, E_*) \to H^*(G_{24}, E_*)$ is the inclusion of fixed point under the action of the Galois group on the right factor of $\W$. For any element in the cohomology of $G_{48}$, we denote its restriction in the cohomology of $G_{24}$ by the same name.

We will prove the following result.
\begin{prop}\label{prop:invariants}
Let $s \geq 1$ be odd. Then, for the action of $\GG_2$,
\begin{enumerate}[(a)]
\item $v_1^s \in E_2$ is an invariant modulo $2$,
\item $v_1^2 \in E_4$ is an invariant modulo $4$,
\item $c_4^{n}\in E_{8n}$ is an invariant modulo $2^{k+4}$ for $n=2^ks $ where $k\geq 0$, and
\item $c_6 c_4^{n} \in E_{8n+12}$ is invariant modulo $8$ for $n\geq 0$.
\end{enumerate}
\end{prop}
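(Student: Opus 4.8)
The plan is to deduce every part from a single principle: the image in $E_\ast/2^m$ of a $BP_\ast BP$-comodule primitive in $BP_\ast/2^m$ is invariant under \emph{all} of $\GG_2$. This is the algebraic shadow of the comparison diagram of \fullref{sec:tools} and is exactly the mechanism that makes \fullref{prop:detect} work; crucially, it yields full $\GG_2$-invariance at once, so I never have to compute with the explicit $\SS_2$-action on $E_\ast$. With this in hand, each of (a)--(d) reduces to matching the relevant element against one of the comodule primitives $x_{1,n}^s$ of \fullref{sec:alphafamiliy}, modulo the appropriate power of $2$.

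Part (a) is immediate: $v_1^s = x_{1,0}^s$ is a comodule primitive in $BP_2/2$, so its image is $\GG_2$-invariant mod $2$ (equivalently, invoke the preceding Lemma, since $\GG_2 \supseteq \SS_2^1$). Part (b) follows by squaring: since $\gamma_\ast v_1 = v_1 + 2w$ by (a), one gets $\gamma_\ast v_1^2 = v_1^2 + 4(v_1 w + w^2) \equiv v_1^2 \pmod 4$; alternatively, $x_{1,1} = v_1^2 - 4v_1^{-1}v_2 \equiv v_1^2 \pmod 4$ and $x_{1,1} \in BP_4/4$ is primitive. For the base case of (c) I would compare $c_4 = 9v_1^4 + 72 v_1 v_2$ with the primitive $x_{1,2} = v_1^4 - 8 v_1 v_2 \in BP_8/16$: since $72 \equiv 8 \equiv -8 \pmod{16}$ one has $c_4 \equiv 9 x_{1,2} \pmod{16}$, and as $9$ is a unit, $c_4$ is $\GG_2$-invariant modulo $16$.

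The general case of (c) then follows by a binomial bootstrap. Writing $\gamma_\ast c_4 = c_4 + 16 r$ and expanding $\gamma_\ast c_4^{2^k} = (c_4 + 16 r)^{2^k}$, each term with $i \geq 1$ has $2$-adic valuation at least $v_2\binom{2^k}{i} + 4i = (k - v_2(i)) + 4i \geq k+4$, so $c_4^{2^k}$ is invariant modulo $2^{k+4}$, and raising to the odd power $s$ preserves this, giving (c) for $n = 2^k s$. For (d) I would compare $c_6$ with $x_{1,1}^3 \in BP_{12}/8$. Expanding $(v_1^2 - 4v_1^{-1}v_2)^3 = v_1^6 - 12 v_1^3 v_2 + 48 v_2^2 - 64 v_1^{-3} v_2^3$, the negative-power terms carry coefficients $48$ and $64$, both $\equiv 0 \pmod 8$, so modulo $8$ the primitive is the honest element $v_1^6 + 4 v_1^3 v_2 \in E_{12}$; and since $c_6 = 27 v_1^6 - 540 v_1^3 v_2 - 216 v_2^2 \equiv 3 v_1^6 + 4 v_1^3 v_2 \equiv 3 x_{1,1}^3 \pmod 8$, the class $c_6$ is $\GG_2$-invariant modulo $8$. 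Combined with $c_4$ invariant modulo $8$ (a fortiori from (c)) and the fact that invariance modulo a fixed power of $2$ is multiplicative, the products $c_6 c_4^n$ are invariant modulo $8$ for every $n \geq 0$.

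The main obstacle is bookkeeping rather than conceptual: one must pin down the three congruences $v_1^2 \equiv x_{1,1} \pmod 4$, $c_4 \equiv 9 x_{1,2} \pmod{16}$, and $c_6 \equiv 3 x_{1,1}^3 \pmod 8$ with exactly these coefficients, and---for $c_6$---check that the negative-$v_1$ terms of $x_{1,1}^3$ vanish modulo $8$ so that the comparison genuinely takes place in $E_{12}$ and not merely in $v_1^{-1}E_{12}$. The single input to state carefully (or cite) is the transfer of comodule primitivity to $\GG_2$-invariance along $BP_\ast \to E_\ast$; the remainder is the binomial-and-multiplicativity formalism for invariance modulo powers of $2$.
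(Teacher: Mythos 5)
Your organizing principle---that a $BP_*BP$-comodule primitive in $BP_*/2^m$ maps to a $\GG_2$-invariant in $E_*/2^m$---is correct, and it is indeed the mechanism underlying the comparison diagram and \fullref{prop:detect}. Parts (a) and (b) go through (your direct squaring argument for (b) is essentially the paper's: both parts are immediate from $v_1$ being invariant modulo $2$), and your binomial bootstrap for general $n$ in (c) and the multiplicativity step in (d) match the paper's ``taking appropriate powers.'' The gap is in the base cases of (c) and (d). The congruences $c_4 \equiv 9\,x_{1,2} \pmod{16}$ and $c_6 \equiv 3\,x_{1,1}^3 \pmod{8}$ are carried out as integer arithmetic on the displayed formulas, which tacitly assumes that the map $BP_* \to E_*$ classifying the formal group law of $C$ sends the $p$-typical generators exactly to $u_1u^{-1}$ and $u^{-3}$. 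It does not: the formal group law of the Weierstrass curve is not $p$-typical, and in this setup those identifications hold only modulo $(2)$ and modulo $(2,u_1)$ respectively, whereas $c_4=9(v_1^4+8v_1v_2)$ and $c_6=27(v_1^6-20v_1^3v_2-8v_2^2)$ are exact identities in $E_*$ with $v_1:=u_1u^{-1}$, $v_2:=u^{-3}$ as notation. Writing the image of the $BP$-generator as $u_1u^{-1}+2c$, one finds that its fourth power differs from $v_1^4$ by $8v_1^2c(v_1+c)$ modulo $16$, so the image of $x_{1,2}$ in $E_8/16$ is not $v_1^4-8v_1v_2$ but carries uncontrolled correction terms at exactly the mod-$16$ level you need; the same happens for $x_{1,1}^3$ modulo $8$. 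Knowing only that $c_4$ agrees with the image $\tilde{x}$ of a primitive modulo $2$ does not help: if $c_4=\tilde{x}+2w$, invariance of $c_4$ modulo $16$ would require $w$ to be invariant modulo $8$, which is of the same nature as what you are trying to prove.

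The paper sidesteps this entirely. It never compares $c_4$ and $c_6$ with $BP_*$-primitives beyond modulo $2$; instead it computes the action of $\gamma\in\SS_2$ on $c_4$ and $c_6$ directly from the explicit formulas for $\gamma_*(u)$ and $\gamma_*(u_1)$ (\fullref{prop:act}), obtaining invariance modulo $16$ and $8$ together with the leading correction terms, and then invokes $G_{48}$-invariance and the fact that $\GG_2$ is topologically generated by $G_{48}$, $\alpha$ and $\pi$. To repair your route you would need to control the images of $v_1$ and $v_2$ under $BP_*\to E_*$ modulo $(16)$ and $(8)$ respectively---a computation of essentially the same difficulty as \fullref{prop:act}, so the detour through comodule primitives buys nothing here.
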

This motivates the following definition, where $s\geq 1$ is odd,
\begin{align}\label{eq:yij}
y_{i/j} = \begin{cases}  v_1^s & i=s, j=1, \\
v_1^2 & i=j=2 , \\
c_4^{2^k s} & i = 2^{k+2}s, j=k+4, \\
c_6 c_4^{(s-3)/2} & i=2s, j=3, \ s\neq 1 .
 \end{cases}
\end{align}
Note that in the last two cases of \eqref{eq:yij}  (for $i = 2^{k+2}s$ and $j=k+4$, or $ i=2s$, $j=3$, and $s\neq 1$), the element $y_{i/j} \in H^0(F_0, E_{2i})$ since $F_0=G_{24} \subseteq G_{48}$ and both $c_4$ and $c_6$ are invariant for the action of $G_{48}$. 

Following the outline of \fullref{prop:detect}, we must compute $\bock{j}(y_{i/j})$. We get specific and do this for the group $\SS^1_2$ defined in \eqref{eq:SS1} by using the algebraic duality spectral sequence (ADSS) of \eqref{eq:ADSS}. The part of the ADSS relevant for our computations is depicted in \fullref{fig:ADSSE1}.
Lemma 7.1.2 of \cite{BGH} gives a methods for computing the Bockstein $\bock{n}$ of certain elements for the spectral sequence of a double complex which is particularly suited to the ADSS. Combined with \fullref{prop:detect}, it has the following immediate consequence.
\begin{thm}\label{prop:ADSS}
Let $(H^0(F_{\bullet},E_{t}),d_1)$ be the complex of \eqref{eq:complex}.
Let $s\geq 1$ be an odd integer. Let 
\begin{enumerate}[(a)]
\item $i = 2^{k+2}s$ and $j=k+4$, or 
\item $ i=2s$, $j=3$, and $s\neq 1$. 
\end{enumerate}
Then, up to multiplication by a unit in $\W$, $\alpha_{i/j} \in H^1(\mathbb{S}_2^1, E_{2i})$ is detected by
the image of the class
\[\Big[\frac{d_1(y_{i/j})}{2^j}\Big] \in H^{1}(H^0(F_1,E_{2i}),d_1)\]
under the edge homomorphism
\[ H^0(F_p,E_{2i} ) \longrightarrow E_{\infty}^{p,0,2i} \subseteq H^p(\SS_2^1, E_{2i}).\]
\end{thm}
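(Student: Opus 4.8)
The plan is to combine \fullref{prop:detect} with an explicit cochain-level model of the Bockstein $\bock{j}$ in the double complex underlying the algebraic duality spectral sequence. First I would verify that $H=\SS_2^1$ and the class $y_{i/j}$ of \eqref{eq:yij} satisfy the three hypotheses of \fullref{prop:detect}. The required congruence $y_{i/j}\equiv v_1^{i}$ modulo $2$ follows from $c_4\equiv v_1^4$ and $c_6\equiv v_1^6$ modulo $2$, which give $c_4^{2^ks}\equiv v_1^{2^{k+2}s}=v_1^i$ and $c_6 c_4^{(s-3)/2}\equiv v_1^{2s}=v_1^i$. Invariance of $y_{i/j}$ modulo $2^j$ is exactly \fullref{prop:invariants}(c)--(d), with $j=k+4$ and $j=3$ respectively, and cyclicity of $H^0(\SS_2^1,E_{2i}/2)$ on $v_1^i$ is the content of the lemma identifying $H^0(H,E_*/2)\cong\FF_4[v_1]$ for $H\supseteq\SS_2^1$. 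This reduces the theorem to computing $\bock{j}(y_{i/j})\in H^1(\SS_2^1,E_{2i})$ and locating its image in the ADSS.

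For the second step I would run the connecting homomorphism of $0\to E_*\xrightarrow{2^j}E_*\to E_*/2^j\to 0$ through the total complex of the duality-resolution double complex $C^{p,q}$ with $E_1^{p,q}\cong H^q(F_p,E_*)$; this is precisely Lemma 7.1.2 of \cite{BGH}. The structural point is that $y_{i/j}$ is an \emph{honest} integral $G_{48}$-invariant element of $E_{2i}$ lifting its reduction mod $2^j$: as a $0$-cochain in $C^{0,0}$ it is $F_0$-invariant, so its vertical differential vanishes and its total differential is the single horizontal cochain $d_1(y_{i/j})\in C^{1,0}$, which on invariants agrees with the edge differential $d_1=1-\alpha$. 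Because the reduction of $y_{i/j}$ is $\SS_2^1$-invariant it survives to $E_\infty^{0,0}$, forcing $d_1=0$ on the nose in $E_1^{1,0}=H^0(F_1,E_{2i}/2^j)$; hence $d_1(y_{i/j})$ is divisible by $2^j$ in the torsion-free module $E_*$, and $w:=d_1(y_{i/j})/2^j$ is integral. A short check using torsion-freeness shows $w$ is $F_1$-invariant and $d_1(w)=0$, so $w$ is a $d_1$-cocycle concentrated in filtration $1$, and $\bock{j}(y_{i/j})=[w]$ is detected by $\big[d_1(y_{i/j})/2^j\big]\in H^1(H^0(F_\bullet,E_{2i}),d_1)$ under the edge homomorphism to $E_\infty^{1,0}\subseteq H^1(\SS_2^1,E_{2i})$.

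Assembling the two steps, \fullref{prop:detect} identifies $\alpha_{i/j}$, up to a unit, with $\bock{j}(y_{i/j})$, and the double-complex computation locates the latter in filtration $1$ as $[d_1(y_{i/j})/2^j]$, yielding the theorem with the $\W$-unit ambiguity inherited from the unit in $R=H^0(\SS_2^1,E_0)$.

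I expect the main obstacle to be the integrality and cocycle bookkeeping of the second step, that is, checking that $d_1(y_{i/j})$ is divisible by $2^j$ \emph{as a cochain} rather than merely up to coboundaries, and that no filtration-$0$ contribution in $E_\infty^{0,1}$ intervenes so that $\bock{j}(y_{i/j})$ has filtration exactly $1$. Both hinge on $y_{i/j}$ being a genuine $G_{48}$-invariant integral lift, so that the vertical differential is exactly zero and no auxiliary correction cochains are needed, together with the torsion-freeness of $E_*=\W[\![u_1]\!][u^{\pm1}]$; granting Lemma 7.1.2 of \cite{BGH}, which packages exactly this argument, the result is then immediate.
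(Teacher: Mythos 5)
Your proposal is correct and follows exactly the route the paper takes: the paper states this theorem as an ``immediate consequence'' of combining \fullref{prop:detect} (whose hypotheses you verify via \fullref{prop:invariants} and the identification $H^0(\SS_2^1,E_*/2)\cong\FF_4[v_1]$) with Lemma 7.1.2 of \cite{BGH}, which packages precisely your double-complex Bockstein computation. Your unpacking of why $d_1(y_{i/j})$ is divisible by $2^j$ and why the resulting cocycle sits in filtration $1$ is the intended content of that lemma, so there is nothing to add.
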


To prove \fullref{prop:invariants} and thus apply \fullref{prop:ADSS}, we will need some information about the action of $\mathbb{S}_2$ on $c_4$ and $c_6$ which we record now.
\begin{prop}\label{prop:act}
Let $\gamma= 1+a_2(\gamma)T^2 \mod T^3$ in $\mathbb{S}_2$. Then
\[\gamma_*(c_4) \equiv c_4 + 16(a_2(\gamma)+a_2(\gamma)^2)v_1v_2 \mod (32, 16 u_1^2) \]
and
\[\gamma_*(c_6)  \equiv c_6+ 8(a_2(\gamma)+a_2(\gamma)^2)v_1^3v_2 \mod (16, 8u_1^4).\]
\end{prop}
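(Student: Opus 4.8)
The plan is to deduce the two congruences from the explicit formulas for the action of $\mathbb{S}_2$ on the generators $u$ and $u_1$ of $E_*$ established in \cite{beaudrytowards}, and then substitute into $c_4 = 9(v_1^4 + 8v_1v_2)$ and $c_6 = 27(v_1^6 - 20v_1^3v_2 - 8v_2^2)$, where $v_1 = u_1u^{-1}$ and $v_2 = u^{-3}$. Everything is controlled once the action on $u$ and $u_1$ is known to sufficient $2$-adic and $u_1$-adic precision, and I note for later use that $v_1v_2 = u_1u^{-4}$ and $v_1^3v_2 = u_1^3u^{-6}$.

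First I would specialize the general action formulas to $\gamma = 1 + a_2(\gamma)T^2 \bmod T^3$. Since $a_0(\gamma) = 1$ and $a_1(\gamma) = 0$, this element sits close to the identity in the filtration by powers of $T$, so $\gamma_*(u)$ and $\gamma_*(u_1)$ agree with $u$ and $u_1$ up to corrections divisible by $2$ (equivalently by $T^2 = -2$) and carrying a factor of $a_2(\gamma)$. The output of this step is expressions for $\gamma_*(v_1)$ and $\gamma_*(v_2)$ to the precision needed downstream: enough to control $\gamma_*(v_1^4)$ and $\gamma_*(v_1v_2)$ modulo $(32, 16u_1^2)$, and $\gamma_*(v_1^6)$, $\gamma_*(v_1^3v_2)$, and $\gamma_*(v_2^2)$ modulo $(16, 8u_1^4)$.

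Second I would substitute and expand, extracting the first nontrivial correction from $\gamma_*(c_4) = c_4 + (\text{correction})$ and similarly for $c_6$. The symmetric combination $a_2(\gamma) + a_2(\gamma)^2$ should emerge because the relation $aT = Ta^\sigma$ in $\mathcal{O}$ intertwines $a_2(\gamma)$ with its conjugate $a_2(\gamma)^\sigma$ once the action on the formal group is computed, and $a_2(\gamma)^\sigma = a_2(\gamma)^2$ by the Teichm\"uller identity $a_2(\gamma)^4 = a_2(\gamma)$. I would keep $a_2(\gamma)$ and $a_2(\gamma)^\sigma$ separate throughout and symmetrize only at the end, so that this combination appears transparently and matches the claimed coefficients $16(a_2(\gamma)+a_2(\gamma)^2)v_1v_2$ and $8(a_2(\gamma)+a_2(\gamma)^2)v_1^3v_2$.

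The main obstacle is precision bookkeeping rather than any conceptual difficulty. The integer coefficients $9, 27, 20, 8$ together with the binomial coefficients from expanding fourth and sixth powers can promote an a priori small correction in $\gamma_*(v_1)$ or $\gamma_*(v_2)$ into a term landing exactly on the boundary of $(32, 16u_1^2)$ or $(16, 8u_1^4)$. The delicate step is to confirm that every cross-term --- for instance the contribution to $\gamma_*(v_1^4)$ of $4v_1^3$ times an order-$2u_1$ correction to $\gamma_*(v_1)$, or the mixed terms arising in $\gamma_*(v_1^3v_2)$ and $\gamma_*(v_2^2)$ --- either cancels or falls strictly inside the stated ideals, leaving only the single surviving monomial in each case. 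Verifying these absorption facts to the correct order is the crux, and it is precisely here that the exact shape of the action formulas from \cite{beaudrytowards} is needed.
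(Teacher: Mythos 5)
Your overall strategy is the same as the paper's: specialize the action formulas $\gamma_*(u) = t_0(\gamma)u$, $\gamma_*(u_1) = t_0(\gamma)u_1 + \tfrac{2}{3}t_1(\gamma)/t_0(\gamma)$ from \cite{beaudrytowards}, substitute into $c_4$ and $c_6$, and extract the leading correction. (For the $c_4$ congruence the paper in fact just cites Lemma 5.2.2 of \cite{beaudrytowards} and only carries this out for $c_6$.) Your account of why $a_2(\gamma)+a_2(\gamma)^2$ appears is also consistent with what happens: the $a_2$ comes from $t_0 \equiv 1+2a_2 \bmod (2,u_1)^2$ and the $a_2^2$ from $t_1 \equiv a_2^2u_1 \bmod (2,u_1^2)$, with $a_2^2 = a_2^{\sigma}$ as you say.

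There is, however, one genuine gap in the plan as stated. You frame the crux as ``precision bookkeeping,'' i.e.\ knowing $\gamma_*(u)$ and $\gamma_*(u_1)$ to sufficiently high $2$-adic and $u_1$-adic order. But the target ideals $(32,16u_1^2)$ and $(16,8u_1^4)$ are not cofinite in the $u_1$-adic direction at lower $2$-adic levels: to prove, say, $\gamma_*(c_6)\equiv c_6 \bmod 8$ you must show a certain polynomial in $t_0, t_1, u_1$ vanishes mod $2$ \emph{identically in $u_1$}, and no finite $u_1$-adic truncation of $t_0$ and $t_1$ can do that. Concretely, after expanding one is left with the bracket $u_1^3\bigl(t_0^6 + t_0^2(3t_0 + 3u_1t_1^2 + u_1^2t_1t_0^2)\bigr)$, whose individual terms (e.g.\ $4u_1^4t_0^2t_1^2$ after multiplying by the overall factor of $4$) do not lie in $(16,8u_1^4)$ and must cancel against each other. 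The cancellation is forced by the exact mod-$2$ relation $t_0^4 \equiv t_0 + u_1t_1^2 + u_1^2t_1t_0^2$ (Proposition 6.3.3 of \cite{beaudrytowards}), which encodes the fact that $\gamma$ is an honest automorphism of the formal group; it is a relation \emph{between} $t_0$ and $t_1$ valid to all orders in $u_1$, not an improvement in the precision of either one separately. Your proposal does not identify this input, and without it the ``absorption'' step you correctly flag as the crux cannot be completed. With that relation added, the remainder of your outline matches the paper's computation.
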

\begin{proof}
The first claim is  Lemma 5.2.2 of \cite{beaudrytowards}. To prove the second claim, we proceed as in the proof of this lemma. From (3.3.1) of \cite{beaudrytowards}, we have that
\begin{align}\label{eqn:formulasact}
\gamma_*(u) &= t_0(\gamma) u & \gamma_*(u_1) &= t_0(\gamma) u_1 +\frac{2}{3} \frac{t_1(\gamma)}{t_0(\gamma)}
\end{align}
where
\begin{align}\label{eqn:t0t1}
t_0(\gamma) &\equiv 1+2a_2(\gamma) \mod (2,u_1)^2, & t_1 &\equiv a_2(\gamma)^2 u_1 \mod (2, u_1^2).
\end{align}
We abbreviate by letting $t_i = t_i(\gamma)$ for $i=0,1$ and $a_2=a_2(\gamma)$.

From \eqref{eqn:formulasact}, we deduce that, modulo $(16)$
\begin{align*}
c_6-\gamma_*(c_6) 
&\equiv 4u^{-6} t_0^{-6} \left(u_1^3t_0^6 + u_1^3t_0^2(3 t_0+3u_1 t_1^2 + u_1^2t_1 t_0^2)+2 (u_1^2 t_1 t_0 + t_0^6+1)\right).
\end{align*}
By Proposition 6.3.3 of \cite{beaudrytowards},
\[ t_0^4  \equiv t_0 + u_1t_1^2+ u_1^2t_1t_0^2   \mod (2)\]
so that
\begin{align*}
c_6-\gamma_*(c_6) \equiv 0 \mod 8.
\end{align*}
To compute the leading term, we consider $c_6-\gamma_*(c_6) $ modulo $(16, u_1^4)$. Using \eqref{eqn:t0t1}, we have that, modulo $(16,u_1^4) $
\begin{align*}
c_6-\gamma_*(c_6)  &\equiv 4u^{-6} t_0^{-6} \left(u_1^3(t_0^6 + 3t_0^3) +2 u_1^2 t_1 t_0 + 2(t_0^6+1)\right) \\
& \equiv   8u_1^3u^{-6}  \left(a_2+ a_2^2  \right).
\end{align*}
In the last line, we used the fact that $t_0^{6} \equiv 1 \mod (2,u_1^4)$ and also modulo $(4,u_1)$. 
\end{proof}

\begin{proof}[Proof of \fullref{prop:invariants}]
Parts (a) and (b) are immediate since $v_1$ is invariant modulo $2$. \fullref{prop:act} shows that $c_4$ and $c_6$ are invariant under the action of $\alpha$ and $\pi$ modulo $16$ and $8$ respectively. Since $c_4$ and $c_6$ are already invariant under the action of $G_{48}$ and $\mathbb{G}_2$ is topologically generated by $G_{48}$, $\alpha$ and $\pi$, parts (c) and (d) follow by taking appropriate powers. \end{proof}

To apply \fullref{prop:ADSS}, we will prove something slightly more general: We will completely compute the differential
\[d_1 \colon E_1^{0,0,*} \to E_1^{1,0,*}.\]
We first identify $E_1^{0,0,*} \cong E_*^{G_{24}} \cong H^0(G_{24},E_*)$ and $E_1^{1,0,*} \cong E_*^{C_6}  \cong H^0(C_{6},E_*)$ more explicitly than we have done so far.

For example, from Section 2 and 3 of \cite{BobkovaGoerss}, we have isomorphisms
\begin{align*}
H^0(G_{24}, E_*)\cong \WW[\![j]\!][c_4, c_6, \Delta^{\pm1}]/(  c_4^3- c_6^2=  (12)^3\Delta, c_4^3=\Delta j).
 \end{align*}
It follows that the elements
\[ \{ c_6^{\epsilon} c_4^{m} \Delta^n \mid m \geq 0, \ \epsilon = 0,1, \ n \in \ZZ \}  \]
form a set of topological $\WW$-module generators, so that, in the category of profinite graded $\WW$-modules,
\[H^0(G_{24}, E_*)\cong 
 \prod_{\substack{n, m\in \ZZ,
m\geq 0 \\
\epsilon = 0,1}}
 \WW\{c_6^{\epsilon} c_4^{m} \Delta^n \}. \]
There is also an isomorphim
\[H^0(C_{6}, E_*) \cong \WW[\![u_1^3]\!][v_1^2, v_1v_2, v_2^{\pm 2}]/\sim \]
where $\sim$ is the ideal
\[ (\cvone^3 - \cvtwo \cj^2, \cvonevtwo^2- \cvone \cvtwo, \cvone \cvonevtwo - \cj \cvtwo).\]
Therefore, a basis of topological $\WW$-module generators for $H^0(C_{6}, E_*) $ is given by
\[ \{  \cvonevtwo^{\epsilon} \cvone^{m}    \cvtwo^{n} \mid m \geq 0, \ \epsilon = 0,1, \ n \in \ZZ \}  \]
and, in the category of profinite graded $\WW$-modules,
\[H^0(C_{6}, E_*)\cong \prod_{\substack{n, m\in \ZZ,
m\geq 0\\
\epsilon = 0,1}}
 \WW\{\cvonevtwo^{\epsilon} \cvone^{m}  \cvtwo^{n} \}. \]
 
We are now ready to compute $d_1$ explicitly. We note that this result is intimately related to Propositions 8.1 and 8.2 \cite{MR}.
\begin{prop}\label{prop:d1diff}
The differential $d_1 \colon E_1^{0,0} \to E_1^{1,0}$ is determined by the following information:
\begin{enumerate}[(a)]
\item For $n,m \in\ZZ$ of the form $n=2^k(2t+1)$, $m\geq 0$ and for $\epsilon = 0,1$,
\begin{align*}
d_1(c_6^{\epsilon}c_4^m\Delta^n) &\equiv \cvone^{3\cdot 2^{k}+2m+ 3\epsilon}\cvtwo^{2^{k}(1+4t)}  \mod (2,  v_1^{9\cdot 2^k+4m +6\epsilon}) 
\end{align*}
and $d_1(\Delta^0) = 0$.
\item For $n\in \ZZ$ of the form $n=2^k(2t+1)$, $n\geq 1$,
\begin{align*}
d_1(c_4^{n}) & \equiv  2^{k+4}  \cvonevtwo  \cvone^{2 (n-1)} \mod (2^{k+5}, 2^{k+4}v_1^{4(n-1)+2}).
\end{align*}
\item For $n\in \ZZ$, $n\geq 0$ of the form $n=2^k(2t+1)$ or for $n=0$,
\begin{align*}
d_1(c_6c_4^{n}) & \equiv  8 \cvonevtwo \cvone^{2n+1}  \mod (16, 8 v_1^{4n+4}).
\end{align*}
\end{enumerate}
\end{prop}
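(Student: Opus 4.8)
The computation rests on the identification $d_1 = 1-\alpha$ from \cite{beaudrytowards} recalled in \fullref{sec:tools}, where $\alpha \in \W^\times \subseteq \SS_2$ acts on $E_*$ as a ring automorphism. My plan is to compute $1-\alpha$ on the three ``generators'' $c_4$, $c_6$, $\Delta$ and then propagate to every monomial $c_6^\epsilon c_4^m \Delta^n$ by means of the two identities
\begin{align*}
(1-\alpha)(xy) &= (1-\alpha)(x)\,y + \alpha(x)(1-\alpha)(y), \\
(1-\alpha)(x^n) &= (1-\alpha)(x)\textstyle\sum_{i=0}^{n-1} x^{n-1-i}\alpha(x)^i,
\end{align*}
which hold because $\alpha$ respects products. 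Since $c_4$ and $c_6$ are $G_{48}$-invariant, $\alpha$ moves them only slightly, and the content of the proposition is the size and leading term of these small defects.

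For the base cases I would use the explicit formulas $\alpha_*(u)=t_0(\alpha)u$ and $\alpha_*(u_1)=t_0(\alpha)u_1+\tfrac23 t_1(\alpha)/t_0(\alpha)$ from \cite{beaudrytowards}, noting that $\alpha$ has $a_0(\alpha)\equiv 1+2\omega$ and $a_i(\alpha)=0$ for $i\ge 1$, so that \fullref{prop:act} does not apply directly and the $a_0$-contribution must be computed separately. This gives $\alpha_*(v_2)=t_0(\alpha)^{-3}v_2$ and $\alpha_*(v_1)\equiv v_1$ up to a small correction from $t_1(\alpha)$. Because $(1+2\omega)^2=-3$, a short $2$-adic calculation yields $1-t_0(\alpha)^{-3}=(8-2\omega)/9$, of $2$-adic valuation $1$; substituting into $c_4=9(v_1^4+8v_1v_2)$ and $c_6=27(v_1^6-20v_1^3v_2-8v_2^2)$ and keeping leading terms produces
\[(1-\alpha)(c_4)\equiv 16\,v_1v_2 \mod (32,16v_1^2), \qquad (1-\alpha)(c_6)\equiv 8\,v_1^3v_2 \mod (16,8v_1^4),\]
up to a unit of $\W$ that plays no role in what follows. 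This establishes parts (b) and (c) for $n=1$ and $n=0$ respectively.

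The passage to powers is then formal. For (b), since $\alpha(c_4)\equiv c_4\bmod 16$ the telescope collapses to $(1-\alpha)(c_4^{n})\equiv n\,c_4^{n-1}(1-\alpha)(c_4)$ to leading order; writing $n=2^k(2t+1)$ turns the scalar $16n$ into $2^{k+4}$ times an odd unit, and reducing $c_4^{n-1}\equiv v_1^{4(n-1)}\bmod 2$ gives the stated leading term, with \fullref{prop:invariants}(c) confirming that $2^{k+4}$ is the exact power of $2$. For (c) the Leibniz identity gives $(1-\alpha)(c_6c_4^{n})=(1-\alpha)(c_6)\,c_4^{n}+\alpha(c_6)(1-\alpha)(c_4^{n})$; the second summand is divisible by $2^{k+4}\ge 16$ (or vanishes when $n=0$), hence disappears modulo $16$, leaving $8\,v_1^3v_2\,c_4^{n}\equiv 8\,v_1^{4n+3}v_2\bmod 16$. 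For the mod $2$ statement (a), the base cases show $(1-\alpha)(c_4)\equiv(1-\alpha)(c_6)\equiv 0\bmod 2$, so only the $\Delta$-factor survives and $(1-\alpha)(c_6^\epsilon c_4^{m}\Delta^{n})\equiv v_1^{6\epsilon+4m}(1-\alpha)(\Delta^{n})\bmod 2$; the exponents $3\epsilon$ and $2m$ of $v_1^2$ come from here. Finally the Frobenius relation $(1-\alpha)(\Delta^{2^k m})\equiv((1-\alpha)(\Delta^{m}))^{2^k}\bmod 2$ reduces $n=2^k(2t+1)$ to the odd case, where the telescope gives $(1-\alpha)(\Delta^{2t+1})\equiv \Delta^{2t}(1-\alpha)(\Delta)$ modulo higher $v_1$-powers; raising to the $2^k$ power produces the factor $2^k$ in every exponent. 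Throughout (a) I would track only the $v_1$-power of the leading term, letting the internal degree $t$ fix the accompanying power of $v_2^2$, which cleanly yields the exponents $3\cdot 2^k$ and $2^k(1+4t)$.

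The crux --- and the step I expect to fight with --- is the remaining base input, namely the value of $(1-\alpha)(\Delta)\bmod 2$, whose leading term must be $(v_1^2)^3(v_2^2)$. Here the ``diagonal'' part of the action is invisible, since $t_0(\alpha)^{-3}\equiv 1\bmod 2$, so the entire mod $2$ answer comes from the $u_1$-adic corrections to $t_0(\alpha)$ (equivalently the $t_1(\alpha)$-term in $\alpha_*(v_1)$). Extracting this term to the precision needed, and pinning down its leading coefficient up to a unit, requires the finer structure of the $\SS_2$-action on $E_*$ developed in \cite{beaudrytowards}; this is exactly the computation that parallels Propositions 8.1 and 8.2 of \cite{MR}. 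A secondary bookkeeping burden is to check that all error terms really lie in the stated ideals $(2,v_1^{9\cdot 2^k+4m+6\epsilon})$, $(2^{k+5},2^{k+4}v_1^{4(n-1)+2})$ and $(16,8v_1^{4n+4})$, which amounts to controlling the next-order term of each $\alpha$-action alongside the leading one.
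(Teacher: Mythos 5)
Your overall architecture --- reducing everything to $d_1 = 1-\alpha$ on the generators $c_4$, $c_6$, $\Delta$ and propagating by multiplicativity --- is the paper's, and your handling of the powers (telescoping for (b), Leibniz for (c), reduction of (a) to $\alpha_*(\Delta^n)$ mod $2$, which the paper also simply imports from Proposition 5.1.1 of \cite{beaudrytowards}) is sound in substance. The gap is in your base cases for $c_4$ and $c_6$, and it stems from a misreading of how $\alpha$ sits inside $\SS_2$. In the presentation $\gamma = \sum a_i(\gamma)T^i$ the coefficients are Teichm\"uller representatives (they satisfy $a_i^4 = a_i$), and $T^2 = -2$ in $\mathcal{O}$; hence $\alpha \equiv 1+2\omega \bmod 4$ unpacks to $\alpha \equiv 1 + a_2(\alpha)T^2 \bmod T^3$ with $a_2(\alpha)$ a primitive third root of unity --- not to ``$a_0(\alpha) = 1+2\omega$ and $a_i(\alpha)=0$ for $i\geq 1$''. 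Consequently \fullref{prop:act} applies to $\alpha$ verbatim, with $a_2(\alpha)+a_2(\alpha)^2 = -1$, and that is exactly how the paper obtains $(1-\alpha)(c_4) \equiv 16v_1v_2$ and $(1-\alpha)(c_6)\equiv 8v_1^3v_2$; there is no separate ``$a_0$-contribution'' to compute.

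The replacement computation you propose is not only unnecessary but gives a different answer. You take $t_0(\alpha) = 1+2\omega$ on the nose, whereas \eqref{eqn:t0t1} only determines $t_0$ modulo $(2,u_1)^2$, and you discard the $t_1$-contribution to $\alpha_*(v_1)$, which feeds the $9v_1^4$ term of $c_4$ a correction of the form $-24a_2^2v_1^4 + \cdots$ that does not lie in $(32,16u_1^2)$ and must cancel against the $u_1$-adic corrections to $t_0$ (this cancellation, via $t_0^4 \equiv t_0 + u_1t_1^2+u_1^2t_1t_0^2 \bmod 2$, is the actual content of the proof of \fullref{prop:act}). Concretely, your diagonal term alone gives $72\,v_1v_2(1-t_0^{-3}) = 8(8-2\omega)v_1v_2 \equiv -16\omega\, v_1v_2 \bmod 32$, whose unit $\omega$ is not congruent to $1$ modulo $2$; since the proposition asserts the coefficient $2^{k+4}$ exactly modulo $2^{k+5}$, this is not a discrepancy that can be absorbed into ``a unit of $\W$ that plays no role.'' The repair is short: delete the base-case computation and invoke \fullref{prop:act} with $\gamma = \alpha$. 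A smaller point: for (b) with $k$ large, the claim that the telescope ``collapses to $n c_4^{n-1}(1-\alpha)(c_4)$ to leading order'' needs the estimate that the $j$-th order term carries $2$-adic valuation at least $k+3j$ (or, as in the paper, an induction by repeated squaring through $c_4^{2^k}$); as written it is asserted rather than checked.
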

\begin{proof}
This differential is given by the action of $1-\alpha$. Further, $\alpha \equiv 1+ \omega T^2$ modulo $T^4$ for $\omega$ a primitive third root of unity. Hence, $a_2(\alpha) + a_2(\alpha)^2 = -1$.

The claim (a) is an immediate consequence of Proposition 5.1.1 of \cite{beaudrytowards}, which states that
\[{\alpha}_*(\Delta^n) \equiv \Delta^n+ v_1^{6\cdot 2^k}v_2^{2^{k+1}(4t+1)} \mod (2, u_1^{9\cdot 2^k}),\]
using the fact that $c_4 \equiv v_1^4$ and $c_6 \equiv v_1^6$ modulo $2$.

To prove (b), from \fullref{prop:act}, using the fact that $c_4 \equiv v_1^4$ modulo $2$, we deduce that
\begin{align*}\alpha_*(c_4^{2^k})
& \equiv (c_4^{2^k} + 2^{k+4}v_1^{4(2^k-1)}v_1v_2) \mod (2^{k+5}, 2^{k+4}u_1^{4(2^k-1)+2}).
\end{align*}
Hence,
\begin{align*}
\alpha_*(c_4^{n}) 
& \equiv c_4^n + 2^{k+4} v_1^{4 (n-1)} v_1v_2 \mod (2^{k+5}, 2^{k+4}u_1^{4(n-1)+2}) .
\end{align*}

Similarly, to prove (c), using that $\alpha_*(c_4) \equiv c_4$ modulo $(16)$ we have
\begin{align*}
\alpha_*(c_6c_4^{n}) & \equiv \alpha_*(c_6)c_4^n \mod (16) \\
 & \equiv  c_6c_4^n +8v_1^{4n+3}v_2  \mod (16, 8 v_1^{4n+4}).\qedhere
\end{align*}
\end{proof}

\begin{rem}
For $(\epsilon, a,b)$ such that $\epsilon = 0,1$, $a \geq 0$, and $b \in \ZZ$,
we define elements $b_{\epsilon,a,b}$ in $E_1^{1,0,t} \cong H^0(C_6, E_t)$ for $t=8\epsilon+4a+12b$ that satisfy
\[ b_{\epsilon,a,b} = \cvonevtwo^{\epsilon} \cvone^{a} \cvtwo^{b} + \ldots  \]
 as follows:
\begin{enumerate}[(a)]
\item For $n,m \in\ZZ$ of the form $n=2^k(2t+1)$, $m\geq 0$ and for $\epsilon = 0,1$,
\begin{align*}
b_{0,3\cdot 2^{k}+2m+ 3\epsilon, 2^{k}(1+4t)}&= d_1(c_6^{\epsilon}c_4^m\Delta^n) 
\end{align*}
\item For $n\in \ZZ$ of the form $n=2^k(2t+1)$, $n\geq 1$,
\begin{align*}
b_{1,2 (n-1),0} &= \frac{d_1(c_4^{n})}{2^{k+4}}.
\end{align*}
\item For $n\in \ZZ$, $n\geq 1$ of the form $n=2^k(2t+1)$ or for $n=0$,
\begin{align*}
b_{1, 2n+1,0} &= \frac{d_1(c_6c_4^{n})}{8}.
\end{align*}
\item
In all other cases, 
\begin{align*}
b_{\epsilon, a,b} &= \cvonevtwo^{\epsilon} \cvone^{a} \cvtwo^{b}.
\end{align*}
\end{enumerate}
Although we will not refer to all of the elements $b_{\epsilon,a,b}$ defined above, it will be useful to have fixed name for them in future computations.
\end{rem}

We now give some consequences of \fullref{prop:d1diff}. We start with an immediate corollary:
\begin{thm}\label{thm:ADSSE}
In the ADSS
\[E_1^{p,q,t} \cong H^q(F_p, E_t) \Longrightarrow H^{p+q}(\SS_2^1, E_t)\]
there is an isomorphism
\[E_2^{0,0,0} \cong E_{\infty}^{0,0,0} \cong \WW\{\la_0\}\]
where $\la_0$ is the unit in $E_1^{0,0,0} \cong H^0(G_{24}, E_0)$. Further, $E_2^{0,0,t}=0$ if $t\neq 0$. 
For $r\geq 0$, the classes $b_{1,r,0}$ are in the kernel of $d_1 \colon E_1^{1,0,t} \to E_1^{2,0,t}$ and detect classes in $E_{\infty}^{1,0,t}$ of degree $t=8+4r$. These classes have order $8$ if $r =2n+1$ and $n\geq 0$. They have order $2^{k+4}$ if $r=2n$ for $n= 2^ks-1$, $s \geq 1$ odd, and $k\geq 0$. 
\end{thm}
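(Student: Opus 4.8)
The two collapse statements are formal from the shape of the spectral sequence: the ADSS sits in columns $0\le p\le 3$ and rows $q\ge0$, so for $r\ge 2$ every $d_r$ out of $E_r^{p,0}$ lands in row $1-r<0$ and every $d_r$ into it comes from column $p-r<0$; only $d_1$ meets the bottom row, whence $E_2^{0,0,t}=E_\infty^{0,0,t}$ and $E_2^{1,0,t}=E_\infty^{1,0,t}$. The computation then rests on reading $d_1$ in the basis of $E_1^{1,0,t}\cong H^0(C_6,E_t)$ given by the classes $b_{\epsilon,a,b}$ of the preceding remark. Each $b_{\epsilon,a,b}=\cvonevtwo^{\epsilon}\cvone^{a}\cvtwo^{b}+\cdots$ differs from its leading monomial only by monomials of strictly larger $v_1$-degree, and since the $v_1$-degree is bounded below in a fixed internal degree, the passage from the monomial basis to $\{b_{\epsilon,a,b}\}$ is unitriangular; so $\{b_{\epsilon,a,b}\}$ is again a topological $\WW$-basis. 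By their very definition $d_1$ carries the $\Delta$-power generators $c_6^{\epsilon}c_4^m\Delta^{n''}$ ($n''\ne0$) to the basis vectors $b_{0,\ast,\ast}$, and carries $c_4^{n'}$ to $2^{k+4}b_{1,r,0}$ and $c_6c_4^{n}$ to $8\,b_{1,r,0}$.

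These formulas make both remaining pieces of the first half immediate. The images of the non-unit generators are nonzero scalar multiples of \emph{distinct} basis vectors ($\Delta$-towers hit distinct $b_{0,\ast,\ast}$, after one checks the index map $n''\mapsto(a,b)$ is injective, while the $c_4^{n'}$ and $c_6c_4^{n}$ hit distinct $b_{1,\ast,0}$), hence are linearly independent; thus $d_1\colon E_1^{0,0,t}\to E_1^{1,0,t}$ is injective off $\WW\{\la_0\}$, giving $E_2^{0,0,0}\cong\WW\{\la_0\}$ and $E_2^{0,0,t}=0$ for $t\ne0$. For survival I write $2^{j}b_{1,r,0}=d_1(z)$ with $z=c_4^{n'}$ (so $2^{j}=2^{k+4}$) or $z=c_6c_4^{n}$ (so $2^{j}=8$); then $2^{j}d_1(b_{1,r,0})=d_1(d_1 z)=0$, and since $E_1^{2,0,t}\cong H^0(C_6,E_t)$ is torsion-free, $d_1(b_{1,r,0})=0$ and $b_{1,r,0}$ persists to $E_\infty^{1,0,t}$.

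Finally I would compute the order of $b_{1,r,0}$ in $E_\infty^{1,0,t}=\coker\bigl(d_1\colon E_1^{0,0,t}\to E_1^{1,0,t}\bigr)$ at $t=8+4r$. A short degree count shows the generators of $E_1^{0,0,t}$ in this degree are exactly the $\Delta$-tower together with the single generator $z$ above. Therefore, in the $b$-basis, $\im(d_1^{0,0,t})$ is the closed span of a family of basis vectors $b_{0,\ast,\ast}$ (coming from the tower) together with $2^{j}b_{1,r,0}$. As $b_{1,r,0}$ has $\epsilon=1$ and is thus a basis vector disjoint from all the $\epsilon=0$ tower images, the cokernel splits off the summand $\WW\{b_{1,r,0}\}/(2^{j})\cong\WW/2^{j}$, so $b_{1,r,0}$ has order exactly $2^{j}$: this is $8$ for odd $r$ and $2^{k+4}$ for $r=2n$ with $n+1=2^{k}s$, $s$ odd.

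The delicate points—which is where I expect the real work to sit—are the basis bookkeeping underlying the last step: verifying in the profinite setting that the unitriangular family $\{b_{\epsilon,a,b}\}$ is genuinely a topological basis, that the $\Delta$-power images are pairwise distinct basis vectors all of type $\epsilon=0$, and that $z$ is the \emph{only} degree-$t$ generator with a nonzero $b_{1,r,0}$-component. Each of these is read off \fullref{prop:d1diff}: the $\Delta$-power generators produce leading monomials carrying an even power of $v_2$ (hence never the odd, $\epsilon=1$ class $b_{1,r,0}$), and the valuations $k+4$ and $3$ are precisely those in \fullref{prop:d1diff}(b) and (c). As a consistency check, \fullref{prop:ADSS} identifies these $b_{1,r,0}$ with the classes detecting the $\alpha_{i/j}$, whose orders $2^{j}$ agree with the orders found here.
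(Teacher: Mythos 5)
Your proposal is correct and is essentially the paper's own argument: the paper states this theorem without proof as an ``immediate corollary'' of \fullref{prop:d1diff}, the point being exactly what you spell out --- that $d_1$ is diagonal with respect to the topological $\WW$-bases $\{c_6^{\epsilon}c_4^m\Delta^n\}$ and $\{b_{\epsilon,a,b}\}$, hitting distinct basis vectors with the scalars $1$, $2^{k+4}$, $8$, so the kernel and cokernel can be read off, while the bottom row supports no higher differentials. The delicate points you flag (that $\{b_{\epsilon,a,b}\}$ is a genuine topological basis despite the error terms being controlled only modulo $(2,v_1^N)$, which follows from topological Nakayama since the change of basis is unitriangular mod $2$ with $v_1$-degree bounded below in each internal degree, and the injectivity of the index bookkeeping) are exactly the details the paper leaves implicit, and your treatment of them is sound.
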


\begin{rem}\label{rem:justify-abuse} 
Since the edge homomorphism of the ADSS has the form
\[H^0(F_p,E_* ) \longrightarrow E_{\infty}^{p,0,*} \subseteq H^p(\SS_2^1, E_*),\]
even if the generators $b_{1,n,0}$ are strictly speaking elements of $E_{\infty}^{p,0,*} $, they represent unique elements in the cohomology of $\SS_2^1$, and hence, we can write $b_{1,n,0} \in H^*(\SS^1_2, E_*)$ without any ambiguity.
\end{rem}

As an immediate consequence of \fullref{thm:ADSSE}, we have the following result, which was already proved in \cite{BGH}:
\begin{cor}\label{cor:fixpoints}
The inclusion $\Z_p \to E_0$ induces an isomorphism
\begin{align*}
H^0(\SS_2^1, E_*) &\cong H^0(\SS^1_2, E_0) \cong \W \\
H^0(\GG_2^1, E_*) &\cong  H^0(\GG_2^1, E_0) \cong \Z_p \\
H^0(\GG_2, E_*) &\cong  H^0(\GG_2, E_0) \cong \Z_p .
\end{align*}
\end{cor}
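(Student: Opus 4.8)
The plan is to read off all three isomorphisms from \fullref{thm:ADSSE} together with the extension structure of the groups, the point being that $H^0$ of a profinite group is simply the module of invariants.

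For the first line I would use that the ADSS \eqref{eq:ADSS} is a first-quadrant spectral sequence converging to $H^{p+q}(\SS_2^1, E_t)$, so the only contribution to total degree $0$ is the corner term $E_\infty^{0,0,t}$. This term receives no differentials (an incoming $d_r$ would originate in the third quadrant) and its only outgoing differential is $d_1\colon E_1^{0,0,t}\to E_1^{1,0,t}$; hence $E_\infty^{0,0,t}=E_2^{0,0,t}$ and $H^0(\SS_2^1,E_t)\cong E_2^{0,0,t}$. By \fullref{thm:ADSSE} this is $\W\{\la_0\}$ for $t=0$ and $0$ for $t\neq0$, with $\la_0$ the unit, i.e.\ the image of the inclusion $\Z_p\to E_0$. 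This gives $H^0(\SS_2^1,E_*)\cong H^0(\SS_2^1,E_0)\cong\W$.

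I would then descend along the two extensions. Since the Galois action does not change the determinant, $\GG_2^1=\SS_2^1\rtimes\Gal$, and taking invariants gives $H^0(\GG_2^1,E_t)=H^0(\SS_2^1,E_t)^{\Gal}$. For $t\neq0$ this vanishes; for $t=0$ it is $\W^{\Gal}=W(\F_2)=\Z_p$, yielding the second line. Finally, with the splitting $\GG_2\cong\GG_2^1\rtimes\Z_2$ of \eqref{eq:SS1}, whose $\Z_2$ is generated by $\pi=1+2\omega\in\W^\times\subseteq\SS_2$, I would compute $H^0(\GG_2,E_*)=H^0(\GG_2^1,E_*)^{\Z_2}$. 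Because the $\SS_2$-action on $E_*$ is $\W$-linear, $\pi$ fixes $\Z_p\subseteq\W\subseteq E_0$ pointwise, so $(\Z_p)^{\Z_2}=\Z_p$, which is the third line; in the remaining degrees $H^0(\GG_2^1,E_t)$ already vanishes.

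I do not expect a genuine obstacle, since all the content is supplied by \fullref{thm:ADSSE}; the only steps deserving a word of justification are the collapse statement $E_\infty^{0,0,t}=E_2^{0,0,t}$ (immediate from the first-quadrant shape) and the identifications $\W^{\Gal}=\Z_p$ and $(\Z_p)^{\langle\pi\rangle}=\Z_p$. Should one prefer to avoid quoting $\W$-linearity of the $\pi$-action, one can instead observe that $\Z_p\to E_0$ is the unit and is therefore automatically $\GG_2$-invariant, which gives the inclusion $\Z_p\subseteq H^0(\GG_2,E_0)$; the reverse inclusion is then forced by the already-computed $H^0(\GG_2^1,E_0)=\Z_p$.
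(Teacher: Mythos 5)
Your proof is correct and follows essentially the same route as the paper: the content comes from \fullref{thm:ADSSE}, and the statements for $\GG_2^1$ and $\GG_2$ follow by taking invariants along the extensions. The only cosmetic difference is that the paper invokes the decomposition $H^*(\SS_2^1,E_*)\cong H^*(\GG_2^1,E_*)\otimes_{\Z_p}\W$ from \cite{BobkovaGoerss} and then sandwiches $H^0(\GG_2,E_0)$ between $\Z_p$ and $H^0(\GG_2^1,E_0)$, which is exactly the fallback you offer in your last sentence.
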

\begin{proof}
\fullref{thm:ADSSE} implies that $H^0(\SS^1_2, E_*) \cong \W$ and since $H^0(\SS^1_2, E_*) \cong H^0(\GG^1_2, E_*) \otimes_{\Z_p}\W$ with the natural action of $\Gal$ on $\W$ (see \cite[Lemma 1.24]{BobkovaGoerss}) the result follows for $\GG_2^1$. The fixed points for $\GG_2$ include in those for $\GG_2^1$ and contain the image of $\Z_p \subseteq E_0$.
\end{proof}

The next three results are depicted in \fullref{fig:ADSS}.
\begin{cor}
Up to multiplication by a unit in $\W$,
\begin{enumerate}[(a)]
\item $b_{1,2n+1,0}$ for $n\geq 0$ detects $\alpha_{(4n+6)/3}$, and
\item $b_{1,2n,0}$ for $n=2^k s-1$, $s\geq 1$ odd, $k\geq 0$ detects $\alpha_{2^{k+2}s/(k+4)}$.
\end{enumerate}
\end{cor}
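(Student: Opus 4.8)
The plan is to deduce both parts directly by combining \fullref{prop:ADSS} with the definitions of the classes $b_{1,r,0}$ recorded in the remark following \fullref{prop:d1diff}; since both inputs are already established, the argument is essentially a matching of indices and internal degrees. I would first place the two detection statements side by side. By \fullref{prop:ADSS}, up to a unit in $\W$ the class $\alpha_{i/j}$ is detected by $[d_1(y_{i/j})/2^j]$ under the edge homomorphism in the two families (a) $i=2^{k+2}s$, $j=k+4$ and (b) $i=2s$, $j=3$, $s\neq 1$, with $y_{i/j}$ as in \eqref{eq:yij}. On the other side, the definitions give $b_{1,2n+1,0}=d_1(c_6c_4^n)/8$ and $b_{1,2(N-1),0}=d_1(c_4^N)/2^{k+4}$ for $N=2^k(2t+1)\geq 1$. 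By \fullref{thm:ADSSE} each $b_{1,r,0}$ is a $d_1$-cycle surviving to $E_\infty^{1,0}$, so (as in \fullref{rem:justify-abuse}) it names a well-defined class in $H^1(\SS_2^1,E_*)$; hence, once I identify $d_1(y_{i/j})/2^j$ with the appropriate $b_{1,r,0}$ as an element of $E_1^{1,0}$, their edge-homomorphism images coincide and the detection statement transfers.

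For part (a) I would substitute into family (b): taking $s=2n+3$ (odd and $\neq 1$ for every $n\geq 0$) gives $i=2s=4n+6$, $2^j=8$, and $(s-3)/2=n$, so $y_{i/j}=c_6c_4^n$ and $[d_1(y_{i/j})/2^j]=b_{1,2n+1,0}$, while a degree check $t=8+4(2n+1)=8n+12=2i$ confirms the internal degrees agree; thus $b_{1,2n+1,0}$ detects $\alpha_{(4n+6)/3}$. For part (b) I would substitute into family (a): setting $N=2^ks=n+1$, that is $n=2^ks-1$, and reading off the $2$-adic decomposition $N=2^k(2t+1)$, the exponent in $b_{1,2(N-1),0}=d_1(c_4^N)/2^{k+4}$ matches the $k$ of the family, so $y_{i/j}=c_4^{2^ks}$, $2^j=2^{k+4}$, and $[d_1(y_{i/j})/2^j]=b_{1,2(2^ks-1),0}=b_{1,2n,0}$ detects $\alpha_{2^{k+2}s/(k+4)}$, again with matching degree $t=8+8n=2^{k+3}s=2i$.

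The closest thing to an obstacle is the bookkeeping that reconciles the indexing conventions: I must check that the decomposition $N=2^k(2t+1)$ defining the $b$'s is the same $2$-adic splitting appearing in $\alpha_{2^{k+2}s/(k+4)}$, and that the endpoint cases are covered, for instance $n=0$ where $b_{1,1,0}=d_1(c_6)/8$ detects $\alpha_{6/3}$ and $b_{1,0,0}=d_1(c_4)/16$ detects $\alpha_{4/4}$. As an independent sanity check, the orders predicted by \fullref{thm:ADSSE}, namely $8$ in part (a) and $2^{k+4}$ in part (b), agree with the orders $2^j$ of the corresponding $\alpha_{i/j}$, so no further convergence or extension input is needed and the corollary follows.
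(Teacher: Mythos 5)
Your proposal is correct and takes essentially the same route as the paper, which proves the corollary in one line by citing \fullref{prop:ADSS} together with \fullref{thm:ADSSE} and \fullref{cor:fixpoints}. Your explicit matching of the indices $s=2n+3$ in case (b) of \eqref{eq:yij} and $N=2^ks=n+1$ in case (a), together with the degree and order checks, simply spells out the bookkeeping the paper leaves implicit.
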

\begin{proof}
This follows from \fullref{prop:ADSS}, using \fullref{thm:ADSSE} and \fullref{cor:fixpoints}.
\end{proof}

We turn to the elements $\alpha_s = \alpha_{s/1}$ where $s\geq 1$ is odd.
\begin{thm}\label{thm:sood}
Let $s\geq 1$ be an odd integer. In the ADSS
\[E_1^{p,q,t} \cong H^q(F_p, E_t) \Longrightarrow H^{p+q}(\SS_2^1, E_t)\]
there is are isomorphisms
\[E_2^{0,1,2s} \cong E_{\infty}^{0,1,2s} \cong  \begin{cases}\FF_4 \{\eta c_6^{\epsilon}c_4^{m}\} & s\neq 3, \  s= 1+6\epsilon +4 m \\
\FF_4\{\mu\} & s=3.
\end{cases}\]
Further $\alpha_s = \alpha_{s/1} \in H^1(\SS_2^1, E_{2s})$ is non-trivial. The edge homomorphisms 
\[H^1(\SS_2^1, E_{2s}) \to E_{\infty}^{0,1,2s} \subseteq H^1(F_0, E_{2s})  \]
are isomorphisms and $\alpha_{s}$ can be identified with its image in $H^1(F_0, E_{2s})$. The element $\alpha_1$ is detected by $\eta$, $\alpha_3$ is detected by $\mu$. If $s\geq 5$, then $s= 1+6\epsilon +4 m$ for some $\epsilon=0,1$ and $m\geq 0$. In this case, $\alpha_{s}$ is detected by $\eta c_6^{\epsilon}c_4^m$.
\end{thm}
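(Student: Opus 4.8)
The plan is to show that in internal degree $2s$ with $s$ odd the algebraic duality spectral sequence degenerates onto the single group $E_\infty^{0,1,2s}$, which is forced to be a copy of $\FF_4$, and then to locate $\alpha_s$ there as a non-trivial Bockstein. The starting observation is a degree count: since $u$ has degree $-2$, every monomial generator of $H^0(G_{24},E_*)$ and of $H^0(C_6,E_*)$ displayed above lies in a degree divisible by $4$, so $H^0(F_p,E_{2s})=0$ for all $p$ when $s$ is odd. Thus the entire $q=0$ row of the $E_1$-page vanishes in internal degree $2s$. This has two consequences. On one hand $E_1^{1,0,2s}=0$, so the only associated graded piece of $H^1(\SS_2^1,E_{2s})$ is $E_\infty^{0,1,2s}$, and the edge homomorphism $H^1(\SS_2^1,E_{2s})\to E_\infty^{0,1,2s}\subseteq H^1(F_0,E_{2s})$, which is restriction to $F_0=G_{24}$, is an isomorphism. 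On the other hand $E_2^{2,0,2s}=0$, so the only differential that could leave the corner $(0,1,2s)$, namely $d_2\colon E_2^{0,1,2s}\to E_2^{2,0,2s}$, vanishes; as $(0,1)$ receives no differentials in a first-quadrant spectral sequence we get $E_2^{0,1,2s}\cong E_\infty^{0,1,2s}$. It therefore remains to compute $E_2^{0,1,2s}=\ker\big(d_1\colon H^1(G_{24},E_{2s})\to H^1(C_6,E_{2s})\big)$ and to place $\alpha_s$ inside it.

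Next I would pin down $\alpha_s$. By \fullref{prop:detect} applied to $H=\SS_2^1$ with $i=s$, $j=1$ and $y_{s/1}=v_1^s$ (the hypotheses hold by \fullref{prop:invariants}(a) and the identification $H^0(\SS_2^1,E_*/2)\cong\FF_4[v_1]$ of \cite{beaudrytowards}), the class $\alpha_s$ is detected, up to a unit in $\W$, by the Bockstein $\bock{1}(v_1^s)\in H^1(\SS_2^1,E_{2s})$. Since the edge homomorphism is restriction and restriction commutes with the Bockstein, the image of $\alpha_s$ in $H^1(G_{24},E_{2s})$ is $\bock{1}(v_1^s)$ computed in $G_{24}$. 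This class is non-zero: if it vanished, then $v_1^s$ would lift to a class in $H^0(G_{24},E_{2s})$, which is $0$ by the vanishing above, whereas $v_1^s$ is a non-zero invariant in $H^0(G_{24},E_{2s}/2)$. Hence $\alpha_s$ restricts non-trivially to $H^1(G_{24},E_{2s})$; in particular $\alpha_s\neq 0$ and it lies in $E_\infty^{0,1,2s}=\ker d_1$.

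It then remains to identify $H^1(G_{24},E_{2s})$, and here I would invoke the module structure of $H^*(G_{24},E_*)$ over $H^0(G_{24},E_*)$ recorded in \cite{BobkovaGoerss}. Because $\nu\in H^1(G_{24},E_4)$ and all of its $\W[\![j]\!][c_4,c_6,\Delta^{\pm1}]$-multiples sit in degrees divisible by $4$, only $\eta$- and $\mu$-multiples can contribute in degree $2s$; using the relation $\eta\nu=0$, the identity reducing $c_6^2$ to $c_4^3$ modulo $2$, and the fact that $\eta\Delta^{-1}c_6c_4^2$ is a unit multiple of $j\mu=\Delta^{-1}c_4^3\mu$ (so that the $\eta$-tower and the $\mu$-tower span the same line), one checks that $H^1(G_{24},E_{2s})$ is a single copy of $\FF_4$ in each such degree: generated by $\eta c_6^\epsilon c_4^m$ for the unique $\epsilon\in\{0,1\}$ and $m\geq0$ with $s=1+6\epsilon+4m$ when $s\neq3$, and by $\mu$ in the exceptional degree $s=3$. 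Since $\ker d_1\subseteq H^1(G_{24},E_{2s})$ is non-zero by the previous paragraph and the ambient group is one-dimensional, $d_1$ must vanish here, so $E_\infty^{0,1,2s}=E_2^{0,1,2s}=H^1(G_{24},E_{2s})$. Feeding this back, $\alpha_s$ is a unit multiple of the generator: $\eta$ for $s=1$, $\mu$ for $s=3$ (consistent with the choice of $\mu$ as the image of $\alpha_3$), and $\eta c_6^\epsilon c_4^m$ for $s\geq5$. This also yields the claimed edge isomorphisms, since every map in the chain $H^1(\SS_2^1,E_{2s})\to E_\infty^{0,1,2s}\to E_2^{0,1,2s}\hookrightarrow H^1(F_0,E_{2s})$ is now an isomorphism.

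The main obstacle is the last step, as it is the only place using genuine input beyond the formal structure of the spectral sequence: the precise description of $H^1(G_{24},E_*)$ as a module over the invariants together with the multiplicative relations $\eta\nu=0$ and $\eta c_6\doteq c_4\mu$ from \cite{BobkovaGoerss}. Everything else — the collapse $E_2^{0,1,2s}=E_\infty^{0,1,2s}$, the edge isomorphism onto $H^1(G_{24},E_{2s})$, and the non-vanishing of $\alpha_s$ — follows formally from the vanishing of invariants in degrees prime to $4$ together with \fullref{prop:detect}.
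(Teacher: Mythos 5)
Your overall architecture is close to the paper's: the vanishing of $H^0(F_p,E_{2s})$ for $s$ odd (giving the edge isomorphism and the collapse $E_2^{0,1,2s}\cong E_\infty^{0,1,2s}$), and the identification of $\alpha_s$ with $\bock{1}(v_1^s)$ via \fullref{prop:detect} together with the injectivity of the Bockstein out of $H^0(F_0,E_{2s}/2)$, are correct and parallel the paper's reasoning (the paper gets the collapse by exhibiting the generators as permanent cycles rather than from $E_2^{2,0,2s}=0$, but both work). However, there is a genuine gap in your last step. You assert that $H^1(G_{24},E_{2s})$ is a single copy of $\F_4$ and then conclude that the non-zero submodule $\ker d_1$ must be everything. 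This is false: $H^1(G_{24},E_{2s})$ is a rank-one $\F_4[\![j]\!]$-module, not a one-dimensional $\F_4$-vector space. For instance $H^1(G_{24},E_2)\cong\F_4[\![j]\!]\{\eta\}$ contains the infinitely many linearly independent classes $\eta j^k$; this is exactly what the $\circ$'s on the $p=0$ line of \fullref{fig:ADSSE1} record, and the relation $\eta\Delta^{-1}c_6c_4^2\doteq j\mu$ that you cite only identifies the $\eta$-tower with the $j$-multiples of the $\mu$-tower --- it does not truncate either one. None of the relations you list ($\eta\nu=0$, $c_6^2\equiv c_4^3$ mod $2$, $\eta c_6\doteq c_4\mu$) kills $\eta j^k c_6^\epsilon c_4^m$ for $k\geq 1$.

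Consequently your dimension count only yields the lower bound $\ker d_1\neq 0$; it does not show that the kernel is one-dimensional, and that is the actual computational content of the theorem. One must prove that $d_1\colon H^1(F_0,E_{2s})\to H^1(F_1,E_{2s})$ is injective away from the $\F_4$ generated by $\eta c_6^\epsilon c_4^m$ (respectively $\mu$), i.e., that $d_1(\eta j^k c_6^\epsilon c_4^m)\neq 0$ for $k\geq 1$. The paper supplies this by reducing modulo $2$ (using $H^1(F_p,E_{2s})\cong H^1(F_p,E_{2s}/2)$ for $p=0,1$) and citing the computation from \cite{beaudrytowards} of the kernel of $d_1$ on $H^1(F_0,E_*/2)$, which is generated by $\eta v_1^{s-1}$. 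Without that input, or an equivalent analysis of the action of $1-\alpha$ on $H^1$, the isomorphism $E_2^{0,1,2s}\cong\F_4\{\eta c_6^\epsilon c_4^m\}$ is not established.
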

\begin{proof}
The associated graded of the ADSS for the group $H^1(\SS^1, E_{2s})$ consists of $E_{\infty}^{0,1,2s}$ and $E_{\infty}^{1,0,2s}$. The latter is a subquotient of $E_{2}^{1,0,2s} \cong E_{2s}^{C_6}$, which is trivial when $s$ is odd. Therefore, $H^1(\SS_2^1, E_{2s}) \cong E_{\infty}^{0,1,2s}$ and the edge homomorphism is an isomorphism. 

Now, note that the reduction modulo $2$ induces isomorphisms $H^1(F_p,E_{2s}) \cong H^1(F_p,E_{2s}/2)$ for $p=0,1$. Further, this isomorphism maps $\eta  c_6^{\epsilon} c_4^m$ to $\eta v_1^{6\epsilon +4m}$. 
So, to compute $E_{\infty}^{0,1,2s}$ we can use the commutative diagram
\[\xymatrix{
0 \ar[r] & H^1(F_0,E_{2s}) \ar[r]^{d_1} \ar[d]_{\cong}&H^1(F_1,E_{2s}) \ar[d]_{\cong}\\
0 \ar[r] &  H^1(F_0,E_{2s}/2) \ar[r]^{d_1} &H^1(F_1,E_{2s}/2) 
}\]
The kernel of $d_1$ for the top row is isomorphic to the kernel of $d_1$ for the bottom row, which was computed in \cite{beaudrytowards} to be generated by $\eta v_1^{s-1} = \eta v_1^{6\epsilon +4m}$ if $s= 1+ 6\epsilon +4m$. Therefore, $E_2^{0,1,2s} \cong \F_4\{ \eta c_6^{\epsilon}c_4^m\}$ as desired. 

Also implied by the extensive computations in \cite{beaudrytowards} is the fact that $H^0(\SS^1_2, E_*) \cong \F_4[v_1]$
with the edge homomorphism
\[\xymatrix{H^0(\SS^1_2, E_{2s}/2) \ar[r]^-{\cong} & E_{\infty}^{0,0,2s} \subseteq H^0(F_0, E_{2s}/2)}  \]
 an isomorphism.
Consider the commutative diagram
\[\xymatrix{H^0(\SS^1_2, E_{2s}/2) \ar[r]^-{\delta_{\SS^1_2}} \ar[d]^{\cong} & H^1(\SS^1_2, E_{2s}) \ar[d]^-{\cong} \\
E_{\infty}^{0,0,2s} \ar[d]^-{\subseteq} & E_{\infty}^{0,1,2s} \ar[d]^-{\subseteq}\\
H^0(F_0, E_{2s}/2) \ar[r]^-{\delta_{F_0}}  & H^1(F_0, E_{2s}) \\
}\]
where $\delta_G$ is the connecting homomorphism for the exact sequence 
\[\xymatrix{0 \ar[r] &  E_* \ar[r]^-{\times 2}  & E_* \ar[r] & E_*/2 \ar[r] & 0. }\]
Since 
\[\delta_{F_0}(v_1^s) =  \begin{cases}
\eta c_6^{\epsilon}c_4^{m} & s\geq 1, \ s\neq 3, \  s= 1+6\epsilon +4 m \\
\mu & s=3 ,
\end{cases} \]
the image of $\alpha_{s}$ is $\eta c_6^{\epsilon}c_4^{m} $ if $s \neq 3$ and $\mu$ if $s=3$. So, the corresponding elements of $E_2^{0,1,2s}$ are permanent cycles in the ADSS. So $E_{\infty}^{0,1,2s} \cong E_2^{0,1,2s}$, generated by the image of $\alpha_{s}$ for $s$ odd.
\end{proof}

It remains to understand the image of $\alpha_{2/2}$.
\begin{thm}
In the ADSS, there  is an isomorphism
\[ E_2^{0,1,4} \cong E_{\infty}^{0,1,4} \cong \W/4\{\nu\} .\]
The edge homomorphism 
\[H^1(\SS_2^1, E_{4}) \to E_{\infty}^{0,1,4} \subseteq H^1(F_0, E_{4})  \]
is an isomorphism and the element $\alpha_{2/2}$ can be identified with its image in $H^1(F_0, E_{4})$, where it is detected by $\nu$. 
\end{thm}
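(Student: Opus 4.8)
The plan is to follow the template of the proof of \fullref{thm:sood}, adapting it to the even internal degree $t=4$ and to the fact that $\alpha_{2/2}$ has order $4$ rather than $2$. Since the only filtration contributions to $H^1(\SS_2^1,E_4)$ are $E_\infty^{0,1,4}$ and $E_\infty^{1,0,4}$, the first task is to show $E_\infty^{1,0,4}=0$, which makes the edge homomorphism $H^1(\SS_2^1,E_4)\to E_\infty^{0,1,4}$ an isomorphism. Here the argument must differ from \fullref{thm:sood}: there one could invoke the vanishing of $E_{2s}^{C_6}$ in odd degrees, whereas now $E_1^{1,0,4}=H^0(C_6,E_4)$ is nonzero, with lowest generator $\cvone=v_1^2$. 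Instead I would appeal to \fullref{thm:ADSSE}, according to which the groups $E_\infty^{1,0,t}$ are nonzero only in degrees $t=8+4r$ with $r\geq0$ (detected by the classes $b_{1,r,0}$). As $4$ is not of this form, $E_\infty^{1,0,4}=0$; concretely this says that $\cvone$ is not a $d_1$-cycle, and I would confirm using \fullref{prop:d1diff} that nothing in internal degree $4$ survives to $E_2^{1,0,4}$. This vanishing is exactly what forces $\alpha_{2/2}$ into the $q=1$ line and distinguishes it from every other even-index $\alpha_{i/j}$.

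The second task is to compute $E_2^{0,1,4}=\ker\bigl(d_1\colon H^1(F_0,E_4)\to H^1(F_1,E_4)\bigr)$ and identify it with $\W/4\{\nu\}$, where $F_0=G_{24}$ and $F_1=C_6$. I would use the known structure of $H^*(G_{24},E_*)$ and $H^*(C_6,E_*)$ from \cite{BobkovaGoerss}, together with the Bockstein description $\nu=\bock{2}(v_1^2)$. That $\nu$ is a $d_1$-cycle follows because $v_1^2$ is $\GG_2$-invariant modulo $4$ by \fullref{prop:invariants}(b); that $\nu$ has order exactly $4$ reflects the fact that $v_1^2$ is invariant modulo $4$ but not modulo $8$ (from $\gamma_*(v_1^2)-v_1^2\in 4E_*$), and the precise order I would cite from the structure of $H^1(G_{24},E_4)$ in \cite{BobkovaGoerss}. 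The delicate point is to show that the kernel of $d_1$ is generated by $\nu$ alone, with no larger free or cyclic summand; I would do this by running the analogue of the commutative square at the end of the proof of \fullref{thm:sood}, but with $E_*/4$ and $\bock{2}$ in place of $E_*/2$ and $\bock{1}$, thereby reducing the kernel computation to the one already carried out in \cite{beaudrytowards}.

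With $E_2^{0,1,4}\cong\W/4\{\nu\}$ in hand, I would show $E_2^{0,1,4}=E_\infty^{0,1,4}$ by ruling out the only possible differential off this spot, $d_2\colon E_2^{0,1,4}\to E_2^{2,0,4}$. This vanishes because $\nu$ is a permanent cycle: it is the restriction to $F_0$ of a genuine class in $H^1(\SS_2^1,E_4)$. To see this I note that $\alpha_{2/2}$ (with $i=2$, $j=2$) falls outside the range of \fullref{prop:ADSS}, so I apply \fullref{prop:detect} directly with $y_{2/2}=v_1^2$; its three hypotheses hold by the Lemma giving $H^0(H,E_*/2)\cong\FF_4[v_1]$ and by \fullref{prop:invariants}(b). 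Hence the image of $\alpha_{2/2}$ in $H^1(\SS_2^1,E_4)$ is $\bock{2}(v_1^2)$ up to a unit in $\W$, and its restriction to $H^1(F_0,E_4)$ is $\nu$. The compatibility of the connecting homomorphisms $\delta_{\SS_2^1}$ and $\delta_{F_0}$ with the edge homomorphism, exactly as in the proof of \fullref{thm:sood}, then identifies the image of $\alpha_{2/2}$ in $E_\infty^{0,1,4}$ with $\nu$ and shows it survives, completing all three assertions.

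The step I expect to be the main obstacle is the order-$4$ bookkeeping in the middle paragraph: distinguishing $\W/4\{\nu\}$ from $\W/2\{\nu\}$ or a larger module requires precise control of the $2$-adic behaviour of the Bockstein, namely the exact order of $\nu$ in $H^1(G_{24},E_4)$ and the verification that $d_1$ annihilates no further classes in degree $4$. The companion subtlety is the clean vanishing $E_\infty^{1,0,4}=0$; since $\cvone=v_1^2$ is visibly not in the image of $d_1$ from $E_1^{0,0,4}$, the real content is that it fails to be a cycle for the outgoing $d_1$, and this is where I would be most careful.
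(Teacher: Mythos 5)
Your overall architecture matches the paper's: split $H^1(\SS_2^1,E_4)$ into $E_\infty^{0,1,4}$ and $E_\infty^{1,0,4}$, show the latter vanishes so that the edge homomorphism is an isomorphism, and get $\nu$ to survive as a permanent cycle because it is the image of $\alpha_{2/2}$ under a map that factors through $H^*(\SS_2^1,E_*)$. The $E_\infty^{0,1,4}$ half of your argument is sound, and in fact simpler than you make it: $E_1^{0,1,4}\cong H^1(G_{24},E_4)\cong\W/4\{\nu\}$ already, so there is no kernel computation to run on the $q=1$ line and no need for a mod~$4$ Bockstein diagram --- permanence of $\nu$ (which you establish correctly via \fullref{prop:detect} with $y_{2/2}=v_1^2$) does everything.

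The gap is in the claim $E_\infty^{1,0,4}=0$, which is the real content of the theorem. \fullref{thm:ADSSE} does not assert that $E_\infty^{1,0,t}$ is nonzero \emph{only} for $t=8+4r$; it asserts that the classes $b_{1,r,0}$ are $d_1$-cycles detecting nonzero classes of the stated orders in those degrees, and is silent about all other degrees, so you cannot quote it for the vanishing at $t=4$. Your fallback --- ``confirm using \fullref{prop:d1diff}'' --- also cannot work on its own, because \fullref{prop:d1diff} computes only the incoming differential $d_1\colon E_1^{0,0,4}\to E_1^{1,0,4}$, whereas $E_2^{1,0,4}=\ker\bigl(d_1\colon E_1^{1,0,4}\to E_1^{2,0,4}\bigr)/\mathrm{im}(d_1)$ also requires control of the outgoing differential on the large profinite module $H^0(C_6,E_4)$; checking that the single element $\cvone$ fails to be a cycle does not rule out other cycles lying outside the image. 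The paper closes this by a different mechanism: reduce the complex $H^0(F_\bullet,E_4)$ modulo $2$, identify its cohomology with that of $H^0(F_\bullet,\W/2)$ shifted by $v_1^2$ using Theorem 5.4.1 of \cite{BGH}, deduce that $d_1$ induces an isomorphism $H^0(F_0,E_4)/2\to\ker(d_1)/2$, and then conclude from the resulting short exact sequences that multiplication by $2$ is an isomorphism on $H^1(H^0(F_\bullet,E_4))$, which forces that group to vanish since it is a complete $\Z_2$-module. Some argument of this kind (or an honest computation of the outgoing $d_1$ in internal degree $4$) is needed; as written your proposal does not supply it.
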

\begin{proof}
The contributions to $H^1(\SS_2^1, E_4)$ in the ADSS consist of $E_{\infty}^{0,1,4}$ and $E_{\infty}^{1,0,4}$. There is an isomorphism
\[E_1^{0,1,4} \cong H^1(F_0, E_{4})  \cong H^1(G_{24}, E_{4})  \cong \W/4\{\nu\}\]
and $\nu$ is so named because it is the image of $\alpha_{2/2}$ under the homomorphism from the ANSS $E_2$-term $\Ext_{BP_*BP}^{1,4}(BP_*, BP_*)$. This map factors through $H^*(\SS_2^1, E_*)$, so $\nu$ must be a permanent cycle in the ADSS. So, all elements of $E_1^{0,1,4} $ persist to $E_{\infty}$. 

We turn our attention to $E_{\infty}^{1,0,4}$ and show that 
\[ E_{\infty}^{1,0,4}(E_*) = E_{2}^{1,0,4}(E_*) \cong H^1( H^0(F_{\bullet}, E_4)) =0.\] 
This implies that the edge homomorphism is an isomorphism, and that $\nu$ lifts uniquely to an element of $H^1(\SS_2^1, E_4)$ where it corresponds to the image of $\alpha_{2/2}$.

We begin with a computation modulo $(2)$. There is a commutative diagram:
\[\xymatrix{
H^0(F_0,E_{4}/2) \ar[r]^{d_1} &H^0(F_1,E_{4}/2) \ar[r]^{d_1}&H^0(F_2,E_{4}/2) \ar[r]^-{d_1} & H^0(F_3,E_4/2)     \\
H^0(F_0,E_0/2) \ar[r]^{d_1}  \ar[u]^-{v_1^2}_-{\cong}   &H^0(F_1,E_0/2) \ar[r]^{d_1}  \ar[u]^-{v_1^2}_-{\cong}  &H^0(F_2,E_0/2)  \ar[u]^-{v_1^2}_-{\cong}\ar[r]^-{d_1}   & H^0(F_3,E_0/2)  \ar[u]^-{v_1^2}_-{\cong}    \\
H^0(F_0,\W/2) \ar[r]^{d_1}  \ar[u]  &H^0(F_1,\W/2) \ar[r]^{d_1}  \ar[u] &H^0(F_2,\W/2)  \ar[u]  \ar[r]^{d_1}  & H^0(F_3,\W/2)  \ar[u]   .
}\]
By Theorem 5.4.1 of \cite{BGH}, the vertical map from the bottom to the middle row induces an isomorphism upon taking cohomology with respect to $d_1$. The cohomology of the bottom row gives a copy of $\F_4$ in each degree, whose generators were called $\la_0$, $\lb_0$, $\lc_0$ $\ld_0$ for $p=0,1,2,3$ respectively. It follows that 
\[ E_2^{p,0,4}(E_*/2)  \cong \begin{cases} \F_4\{ v_1^2 \la_0\}    & p=0 \\
 \F_4\{ v_1^2 \lb_0\}    & p=1.
\end{cases}
\]
Let $\ker(d_1)$ be the kernel of $ H^0(F_1, E_4)  \xra{d_1} H^0(F_2, E_4) $ and $\widetilde{\ker}(d_1)$ that of $H^0(F_1, E_4/2)  \xra{d_1} H^0(F_2, E_4/2) $.
The diagram
\[ \xymatrix{ H^0(F_0, E_4/2)  \ar[d]^-{d_1} \ar[r]^-{\cong}  & \F_4\{v_1^2\la_0\}   \oplus H^0(F_0, E_4)/2 \ar[d]^-{0 \oplus d_1 } \\  
\widetilde{\ker}(d_1) \ar[r]^-{\cong}   & \F_4\{v_1^2\lb_0\}  \oplus \ker(d_1)/(2)
}    \]
commutes. Given the cohomology of the left vertical map, it must be the case that $d_1$ induces an isomorphism
\[d_1 \colon  H^0(F_0, E_4)/2  \xra{\cong} \ker(d_1)/(2). \]

So, we have a commutative diagram
\begin{equation}\label{eq:bigsquare}
\xymatrix{  H^0(F_0, E_4) \ar[r]^-{d_1} \ar[d]^-{\times 2} & \ker(d_1)\ar[d]^-{\times 2}  \ar[r] & H^1(H^0(F_{\bullet}, E_4))   \ar[d]^-{\times 2} \\
H^0(F_0, E_4) \ar[r]^-{d_1} \ar[d] & \ker(d_1)\ar[r] \ar[d]  & H^1(H^0(F_{\bullet}, E_4)) \ar[d]   \\
 H^0(F_0, E_4)/2 \ar[r]^-{d_1}_-{\cong} & \ker(d_1)/(2) \ar[r] & 0   .}\end{equation}
 The left two columns of \eqref{eq:bigsquare} are short exact by definition.
By \fullref{thm:ADSSE} $E_2^{0,0,4} = 0$, so the map $d_1 \colon H^0(F_0, E_4)  \to H^0(F_1, E_4) $ is injective. So the rows of \eqref{eq:bigsquare} are short exact. It follows that the third column is short exact. So multiplication by $2$ is an isomorphism on $ H^1(H^0(F_{\bullet}, E_4))$. Since this is a complete $\Z_2$-module, it must be trivial.
\end{proof}

We now identify the $\alpha$-family in $H^*(\GG_2,E_*)$. We begin with an observation.
\begin{rem}\label{rem:naming}
Recall once more that $H^*(\SS^1_2, E_*) \cong H^*(\GG^1_2, E_*) \otimes_{\Z_2}\W$ and that the restriction
\[H^*(\GG^1_2, E_*)  \cong H^*(\SS^1_2, E_*)^{\Gal} \to  H^*(\SS^1_2, E_*) \]
is an inclusion. We have identified the $\alpha_{i/j}$s in $H^1(\SS^1_2, E_{2i})$ up to a unit in $\W$. We now fix $\alpha_{i/j}$ to be a choice of Galois invariant generator for the $\W$-cyclic subgroup the classes we have found generate. We denote by the same name the unique element in $H^*(\GG^1_2, E_*)$ that maps to it.

On the other hand, the map
\[ H^*(\GG_2, E_*) \to H^*(\GG^1_2, E_*)\]
is not injective. So one must proceed with care. Recall that there is a split exact sequence
\[\xymatrix{  1 \ar[r] & \GG^1_2 \ar[r] &  \GG_2 \ar[r] & \Z_2^{\times}/(\pm1) \ar[r] & 1.} \]
Using the fact that $  \Z_2^{\times}/(\pm1) \cong \Z_2$, we have $\GG_2 \cong \GG^1_2 \rtimes  \Z_2$. We choose $\pi$ to be a topological generator for $\Z_2 \cong \GG_2/\GG^1_2$, which acts on $H^*(\GG^1_2,E_*)$. Let $\mathrm{res} \colon H^*(\GG_2,E_*)  \to H^*(\GG^1_2,E_*)$ be the restriction. From the Lyndon-Hochschild-Serre Spectral Sequence for the group extension, one obtains a long exact sequence
\begin{equation}\label{eq:les}
\xymatrix{\ldots  \ar[r] &  H^*( \GG^1_2 , E_*) \ar[r]^-{\pi -1} & H^*( \GG^1_2 , E_*) \ar[r]^-{\delta} &H^{*+1}( \GG_2 , E_*) \ar[r]^-{\mathrm{res}} & H^{*+1}( \GG^1_2 , E_*) \ar[r]^-{\pi-1} &  \ldots   }\end{equation}
To fully analyze the long exact sequence \eqref{eq:les}, we would need a full computation of $H^*( \GG^1_2 , E_*)$, and control over the action of $\pi$ on $H^*( \GG^1_2 , E_*)$. Neither is available to us at this point. However, in the range of interest for computing the $\alpha$-family, we get lucky. 
\end{rem}

\begin{figure}
\center
\includegraphics[width=\textwidth]{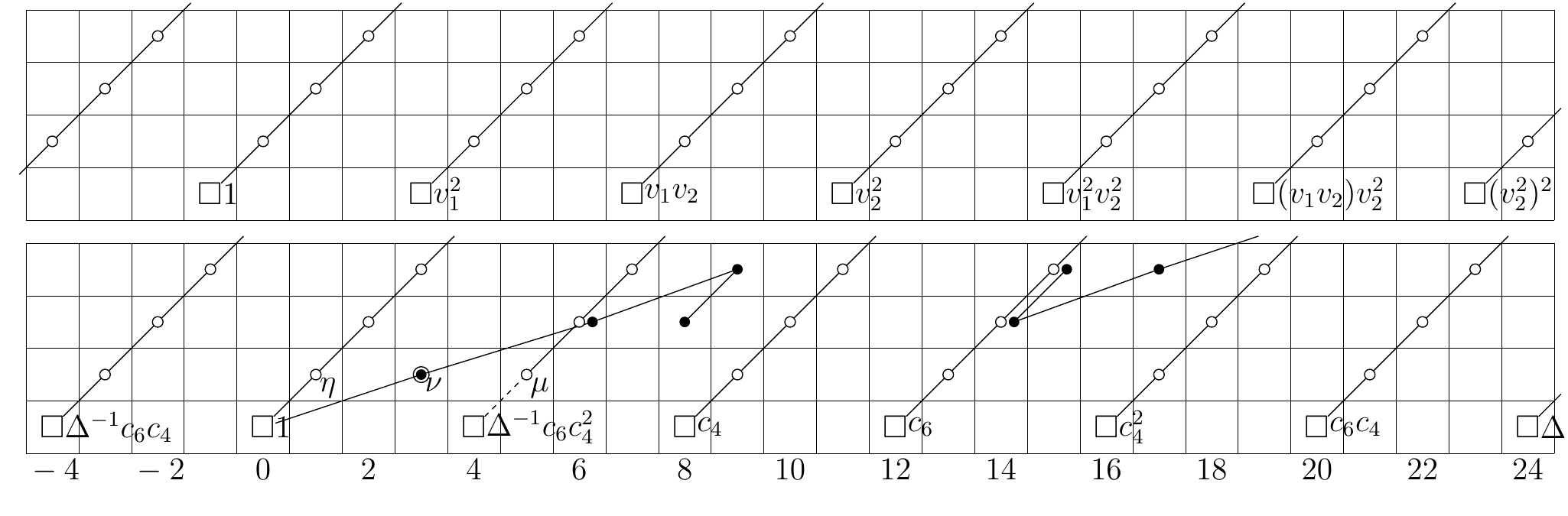}
\captionsetup{width=\textwidth}
\caption{A part of the $E_1$-term of the algebraic duality spectral sequence, $E_1^{p,q,t}=H^q(F_p, E_t)$. The top is $E_1^{1,q,t}$ for $0\leq q \leq 3$, drawn in the $(t-q-1,q)$-plane. The bottom is $E_1^{0,q,t}$ in the same range, drawn in the $(t-q,q)$ plane. A $\Box$ denotes a copy of $\W[\![j]\!]$ if $p=0$ and $\W[\![u_1^3]\!]$ if $p=1$. A $\circ$ denotes a copy of $\F_4[\![j]\!]$ if $p=0$ and $\F_4[\![u_1^3]\!]$ if $p=1$. A $\bullet$ is a copy of $\F_4$ and \circled{$\bullet$} a copy of $\W/4$.
The labels denote the generators as $\W[\![j]\!]$-modules on the $p=0$-line and as $\W[\![u_1^3]\!]$-modules on the $p=1$-line. The lines denote multiplication by $\eta$ and $\nu$. The dashed line indicates that $\eta \Delta^{-1}c_6c_4^2 = j \mu$.
}
\label{fig:ADSSE1}
\end{figure}

\begin{figure}
\center
\includegraphics[width=\textwidth]{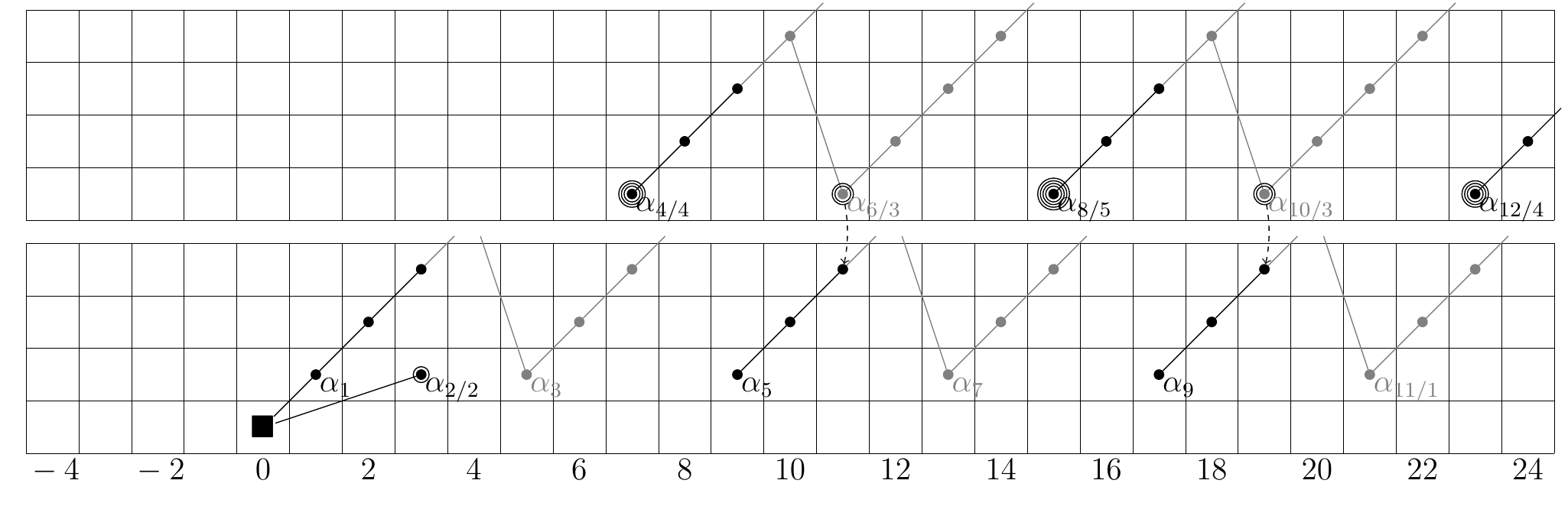}
\captionsetup{width=\textwidth}
\caption{The contribution to the $\alpha$-family in the algebraic duality spectral sequence. The differentials indicate differentials that occur in the homotopy fixed points spectral sequence $H^*(\SS_2^1, E_*) \Longrightarrow \pi_*E^{h\SS_2^1}$ and the dashed arrows indicate exotic extensions on the $E_{\infty}$-term of that spectral sequence.}
\label{fig:ADSS}
\end{figure}

\begin{prop}
The restriction $H^1( \GG_2 , E_t)  \to H^{1}( \GG^1_2 , E_t) $ is injective if $t \neq 0$.
\end{prop}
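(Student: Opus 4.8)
The plan is to read the claim directly off the long exact sequence \eqref{eq:les} of \fullref{rem:naming}, associated to the split extension $\GG_2 \cong \GG^1_2 \rtimes \Z_2$ with $\pi$ a topological generator of the quotient $\Z_2$. In cohomological degree $1$ and a fixed internal degree $t$, the relevant segment reads
\[ \cdots \to H^0(\GG^1_2, E_t) \xra{\pi - 1} H^0(\GG^1_2, E_t) \xra{\delta} H^1(\GG_2, E_t) \xra{\mathrm{res}} H^1(\GG^1_2, E_t) \to \cdots. \]
By exactness at $H^1(\GG_2, E_t)$ we have $\ker(\mathrm{res}) = \im(\delta)$, and by exactness at the source of $\delta$ the image of $\delta$ is naturally identified with the cokernel of $\pi - 1$ acting on $H^0(\GG^1_2, E_t)$. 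Thus $\mathrm{res}$ is injective in internal degree $t$ precisely when $\pi - 1$ is surjective on $H^0(\GG^1_2, E_t)$.

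The only further input needed is the computation of these degree-zero fixed points, which is exactly what \fullref{cor:fixpoints} provides: the invariants $H^0(\GG^1_2, E_*)$ are concentrated in internal degree $0$, where they reduce to $\Z_2$. In particular $H^0(\GG^1_2, E_t) = 0$ for every $t \neq 0$. For such $t$ the map $\pi - 1$ is an endomorphism of the zero module, hence trivially surjective with vanishing cokernel, so $\ker(\mathrm{res}) = \im(\delta) = 0$ and the restriction is injective, as claimed.

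Since the argument is purely formal once \fullref{cor:fixpoints} is available, there is no real obstacle at this stage: the substantive work lives upstream, in the ADSS computation that pins down the fixed points, and the present proof requires only careful exactness bookkeeping. It is worth recording why the hypothesis $t \neq 0$ cannot be dropped. In degree $t = 0$ the module $H^0(\GG^1_2, E_0) \cong \Z_2$ is the image of $\Z_2 \subseteq E_0$, on which all of $\GG_2$ — and in particular $\pi$ — acts trivially, so $\pi - 1 = 0$ there and its cokernel is $\Z_2 \neq 0$. Hence $\delta$ contributes a nonzero kernel to $\mathrm{res}$ exactly in internal degree zero, which is precisely the degree excluded from the statement.
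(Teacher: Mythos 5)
Your proof is correct and follows essentially the same route as the paper: both read the injectivity off the long exact sequence \eqref{eq:les}, using \fullref{cor:fixpoints} to see that $H^0(\GG^1_2,E_t)=0$ for $t\neq 0$, so that $\ker(\mathrm{res})=\operatorname{im}(\delta)$ vanishes in those degrees. (Your phrasing via the cokernel of $\pi-1$ is marginally more economical than the paper's, which first notes that $\pi-1$ acts as zero on $H^0(\GG^1_2,E_*)$, but the substance is identical.)
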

\begin{proof}
By \fullref{cor:fixpoints}, $H^0(\GG_2^1, E_*) \cong \Z_p$ and the restriction $H^0(\GG_2,E_*) \to H^0(\GG^1_2,E_*)$ is an isomorphism. This is the first map in \eqref{eq:les}, so we get an exact sequence
\[\xymatrix{ 0 \ar[r] &  H^0( \GG^1_2 , E_*)  \ar[r]^-{\delta} & H^1( \GG_2 , E_*) \ar[r] &H^{1}( \GG^1_2 , E_*) \ar[r]^{\pi-1} & H^{1}( \GG^1_2 , E_*)  }\]
The claim follows from the fact that $H^0( \GG^1_2 , E_t)  =0 $ of $t\neq 0$, 
 \end{proof}
 
 \begin{cor}
 There are unique classes in $\alpha_{i/j} \in H^1(\GG_2, E_{2i})$ which map to the same named classes in $H^1(\SS^1_2, E_{2i})$ as described in \fullref{rem:naming}. These are the images of the $\alpha$-family elements under the map from the $E_2$-term of the $BP$-based Adams-Novikov Spectral Sequence.
 \end{cor}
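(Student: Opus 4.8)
The plan is to assemble the corollary from the proposition just proved together with the identifications of the $\alpha_{i/j}$ in $H^1(\SS^1_2, E_{2i})$ carried out in \fullref{thm:sood} and the surrounding results. There are exactly two things to establish: the \emph{existence} of a class in $H^1(\GG_2, E_{2i})$ restricting to the named $\alpha_{i/j}$, and its \emph{uniqueness}. The substantive content has already been discharged upstream, so the proof should be a short bookkeeping argument.

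For existence, I would invoke the comparison diagram of \fullref{sec:tools}. The Adams--Novikov class $\alpha_{i/j} \in \Ext_{BP_*BP}^{1,2i}(BP_*, BP_*)$ has an image $\overline{\alpha}_{i/j} \in H^1(\GG_2, E_{2i})$, and the composite restriction
\[ H^1(\GG_2, E_*) \longrightarrow H^1(\GG^1_2, E_*) \longrightarrow H^1(\SS^1_2, E_*) \]
sends $\overline{\alpha}_{i/j}$ to the image of the Adams--Novikov class in $H^1(\SS^1_2, E_{2i})$. By \fullref{thm:sood} and the accompanying theorems (covering the cases $i$ odd with $j=1$, the case $i=j=2$, and the remaining even $i$ via the classes $b_{1,r,0}$), this image is, up to a unit in $\W$, the detecting class; and since it lies in the Galois-invariant subgroup $H^1(\GG^1_2, E_{2i}) = H^1(\SS^1_2, E_{2i})^{\Gal}$, it is a Galois-invariant generator of the relevant $\W$-cyclic module. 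I would therefore take the named class $\alpha_{i/j}$ of \fullref{rem:naming} to be exactly this image, so that $\overline{\alpha}_{i/j}$ restricts to it by construction.

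For uniqueness, I would use the preceding proposition: since $i \geq 1$ we have $2i \neq 0$, so the restriction $H^1(\GG_2, E_{2i}) \to H^1(\GG^1_2, E_{2i})$ is injective. Combined with the injectivity of the Galois-fixed-point inclusion $H^1(\GG^1_2, E_{2i}) \hookrightarrow H^1(\SS^1_2, E_{2i})$ recorded in \fullref{rem:naming}, any two classes in $H^1(\GG_2, E_{2i})$ with the same image in $H^1(\SS^1_2, E_{2i})$ must coincide. Hence $\overline{\alpha}_{i/j}$ is the unique class in $H^1(\GG_2, E_{2i})$ with the prescribed restriction, and by definition it is the image of the Adams--Novikov $\alpha$-family element. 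The only point requiring care, and the one I expect to be the main (though minor) obstacle, is the normalization of generators: one must ensure that the Galois-invariant generator produced by the Adams--Novikov map genuinely agrees with, rather than merely being a unit multiple of, the class fixed in \fullref{rem:naming}. This is resolved at no cost by letting the Adams--Novikov image \emph{define} the chosen generator, which is legitimate precisely because the earlier theorems guarantee it is a generator.
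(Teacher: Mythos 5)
Your argument is correct and is exactly the (implicit) argument the paper intends: uniqueness from the injectivity of $H^1(\GG_2, E_{2i}) \to H^1(\GG^1_2, E_{2i})$ for $2i \neq 0$ composed with the Galois-fixed-point inclusion into $H^1(\SS^1_2, E_{2i})$, and existence from the factorization of the Adams--Novikov comparison map through $H^*(\GG_2, E_*)$ together with the detection results. The paper leaves this as an unproved corollary, and your handling of the normalization of the generator (letting the Adams--Novikov image fix the choice made in the remark) is consistent with the paper's phrasing.
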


\begin{cor}
The topological $\alpha$-family maps non-trivially in $\pi_*S_{K(2)}$, and in $\pi_*E^{h\SS_2^1}$.
The $\alpha$-family is detected in $H^{*}(\GG_2, E_{*})$ by the classes $\alpha_1^k \alpha_{i/j}$ which support the standard pattern of differentials in the spectral sequence
 \begin{equation}\label{eq:Dsss} H^s(\GG_2, E_t) \Longrightarrow \pi_{t-s}E^{h\G_2}.\end{equation}
\end{cor}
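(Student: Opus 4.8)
The strategy is to deduce the statement from the multiplicative comparison of spectral sequences recorded in \fullref{sec:tools}. The unit map $S \to E^{h\GG_2}$ induces a map from the $BP$-based Adams--Novikov Spectral Sequence to \eqref{eq:Dsss} which is multiplicative and compatible with $\pi_*S \to \pi_*E^{h\GG_2}$ on abutments; write $f$ for the induced map on $E_2$-terms. By the previous corollary $f(\alpha_{i/j})$ is the class $\alpha_{i/j}\in H^1(\GG_2,E_{2i})$ we have already named, and $f(\alpha_1)=\eta$. Since $f$ is a ring map, $f(\alpha_1^k\alpha_{i/j})=\eta^k\alpha_{i/j}$, so it remains only to understand these $\eta$-power multiples on the $E_2$-term and to transport the differentials of \fullref{sec:alphafamiliy}.

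First I would verify that every non-trivial product $\alpha_1^k\alpha_{i/j}$ maps to a non-zero class. Restricting along $H^*(\GG_2,E_*)\to H^*(G_{48},E_*)$ carries $\alpha_{i/j}$ to the corresponding class $\eta c_6^{\epsilon}c_4^m$, $\mu$, or $\nu$ appearing in \fullref{thm:sood} and \fullref{fig:ADSSE1}. The $\eta$-multiplication structure of $H^*(G_{48},E_*)\cong H^*(G_{24},E_*)^{\Gal}$ recorded in \cite{BobkovaGoerss} exhibits these as the bottoms of infinite $\eta$-towers on the $E_2$-term, so $\eta^k$ times each is non-zero for all $k$; consequently the classes $\eta^k\alpha_{i/j}\in H^*(\GG_2,E_*)$ are themselves non-zero. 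The sole exception is $\eta\nu$, which vanishes: this is forced by multiplicativity from the source relation $\alpha_1\alpha_{2/2}=0$, and it matches exactly that relation, so no inconsistency arises.

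With the $E_2$-term understood, the standard pattern of differentials transports formally: each differential $d_3(\alpha_{i/j})=\alpha_1^3\alpha_\ast$ of \fullref{sec:alphafamiliy} satisfies $d_3(f(\alpha_{i/j}))=\eta^3 f(\alpha_\ast)$, and since both classes are non-zero by the previous paragraph, the identical $d_3$ occurs in \eqref{eq:Dsss}. The main obstacle is that $f$ need not be injective, so one must still confirm that no further differentials disturb the detection; I would settle this using the explicit form of the spectral sequence in the $\alpha$-family range summarised in \fullref{fig:ADSS}, together with the restriction to the finite subgroup $G_{48}$, where the surviving $\alpha$-family classes are detected in $\pi_*E^{hG_{48}}$ by a computation equivalent to the classical behaviour of the image of $J$. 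Because the image of a surviving topological $\alpha$-family element of $\pi_*S$ in $\pi_*E^{hG_{48}}$ is then non-zero and the restriction map preserves Adams--Novikov filtration, the element is detected by the non-zero class $\eta^k\alpha_{i/j}$ of the same filtration in \eqref{eq:Dsss}; in particular it maps non-trivially to $\pi_*E^{h\GG_2}$, hence to $\pi_*S_{K(2)}$. Finally, the statement over $\SS_2^1$ follows by restricting along the injection $H^*(\GG_2,E_*)\to H^*(\SS_2^1,E_*)$ in positive degrees and repeating the argument inside the homotopy fixed point spectral sequence for $E^{h\SS_2^1}$, whose differentials are those depicted in \fullref{fig:ADSS}.
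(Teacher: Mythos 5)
Your overall framework (compare the Adams--Novikov spectral sequence to \eqref{eq:Dsss} via the multiplicative map $f$ and transport the $d_3$'s) is reasonable, but the device you use to certify both non-vanishing and detection --- restriction to the finite subgroup $G_{48}$ --- fails for exactly half of the family, and this is a genuine gap. By \fullref{prop:ADSS}, the classes $\alpha_{i/j}$ with $i$ even and $i\geq 4$ are detected in the algebraic duality spectral sequence in filtration $p=1$, by the classes $b_{1,n,0}\in H^0(C_6,E_{2i})$; equivalently, they die under the edge homomorphism to $E_{\infty}^{0,1,2i}\subseteq H^1(G_{24},E_{2i})$, i.e.\ they restrict to \emph{zero} in $H^1(G_{24},E_{2i})$ and hence in $H^1(G_{48},E_{2i})$. (For instance $H^1(G_{48},E_{8})=0$, so $\alpha_{4/4}$ cannot restrict to anything non-zero; this is the algebraic shadow of the familiar fact that most of the image of $J$ dies in $\mathrm{TMF}$.) Consequently your assertions that restriction carries every $\alpha_{i/j}$ to $\eta c_6^{\epsilon}c_4^m$, $\mu$ or $\nu$, and that a surviving topological $\alpha$-family element has non-zero image in $\pi_*E^{hG_{48}}$ detected in the same filtration, are false for $i$ even, $i\geq 4$; with them collapses your argument for the non-triviality of $\eta^k\alpha_{i/j}$ and for detection in $\pi_*E^{h\GG_2}$. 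The entire point of the $C_6$-line of the ADSS is to see precisely these classes, which are invisible to $G_{48}$.

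The argument the paper uses is much softer and purely a filtration count: the elements of the topological $\alpha$-family are detected in the ANSS in filtration at most $3$; the first possible differential in \eqref{eq:Dsss} (and in its analogue for $\SS_2^1$) is a $d_3$, so a class of filtration $\leq 3$ can only be hit by a $d_3$ originating on the zero line; and $H^0(\GG_2,E_*)\cong\Z_2$ (resp.\ $H^0(\SS_2^1,E_*)\cong\W$) is concentrated in internal degree $t=0$ and consists of permanent cycles by \fullref{cor:fixpoints} and \cite[Section~1.2]{BobkovaGoerss}. Hence no differential can kill the non-zero classes $\alpha_1^k\alpha_{i/j}$ already produced by the preceding results (see \fullref{fig:ADSS}), and the topological $\alpha$-family maps non-trivially. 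If you wish to retain a comparison-to-a-subgroup argument for the classes with $i$ even, the subgroup must be large enough to retain the $C_6$-line contribution --- in practice $\SS_2^1$ itself via the ADSS --- not $G_{48}$.
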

\begin{proof}
The only thing to justify is that there are no differentials killing non-trivial elements of the image of the topological $\alpha$-family. However, the ANSS filtration of the $\alpha$-elements detecting non-trivial elements in homotopy is at most $3$. The first differential in \eqref{eq:Dsss} and the analogue for $\SS_2^1$ is a $d_3$, and the zero line of \eqref{eq:Dsss} consists of the permanent cycles $H^0(\GG_2, E_*) \cong\Z_p$, respectively $H^0(\SS_2^1, E_*) \cong \W$. That the latter are all permanent cycles follows from Section 1.2 of \cite{BobkovaGoerss}.
\end{proof}


\begin{thebibliography}{GHMR15}

\bibitem[Ada66]{AdamsJIX}
John~F. Adams.
\newblock On the groups {$J(X)$}. {IV}.
\newblock {\em Topology}, 5:21--71, 1966.
\newblock URL: \url{https://doi.org/10.1016/0040-9383(66)90004-8}.

\bibitem[BE17]{BEZ}
Prasit {Bhattacharya} and Philip {Egger}.
\newblock {Towards the $K(2)$-local homotopy groups of $Z$}.
\newblock {\em ArXiv e-prints}, June 2017.
\newblock \href {http://arxiv.org/abs/1706.06170} {\path{arXiv:1706.06170}}.

\bibitem[Bea15]{beaudryresolution}
Agn\`es Beaudry.
\newblock The algebraic duality resolution at {$p=2$}.
\newblock {\em Algebr. Geom. Topol.}, 15(6):3653--3705, 2015.
\newblock URL: \url{https://doi.org/10.2140/agt.2015.15.3653}.

\bibitem[Bea17]{beaudrytowards}
Agn\`es Beaudry.
\newblock Towards the homotopy of the {$K(2)$}-local {M}oore spectrum at
  {$p=2$}.
\newblock {\em Adv. Math.}, 306:722--788, 2017.
\newblock URL: \url{https://doi.org/10.1016/j.aim.2016.10.020}.

\bibitem[Beh12]{behse2}
Mark Behrens.
\newblock The homotopy groups of {$S_{E(2)}$} at {$p\geq 5$} revisited.
\newblock {\em Adv. Math.}, 230(2):458--492, 2012.
\newblock URL: \url{https://doi.org/10.1016/j.aim.2012.02.023}.

\bibitem[BG16]{BobkovaGoerss}
Irina {Bobkova} and Paul~G. {Goerss}.
\newblock {Topological resolutions in K(2)-local homotopy theory at the prime
  2}.
\newblock {\em ArXiv e-prints}, October 2016.
\newblock \href {http://arxiv.org/abs/1610.00158} {\path{arXiv:1610.00158}}.

\bibitem[BGH17]{BGH}
Agn\`es {Beaudry}, Paul~G. {Goerss}, and Hans-Werner {Henn}.
\newblock {Chromatic splitting for the $K(2)$-local sphere at $p=2$}.
\newblock {\em ArXiv e-prints}, December 2017.
\newblock \href {http://arxiv.org/abs/1712.08182} {\path{arXiv:1712.08182}}.

\bibitem[GH16]{GHBC}
Paul~G. Goerss and Hans-Werner Henn.
\newblock The {B}rown-{C}omenetz dual of the {$K(2)$}-local sphere at the prime
  3.
\newblock {\em Adv. Math.}, 288:648--678, 2016.
\newblock URL: \url{https://doi.org/10.1016/j.aim.2015.08.024}.

\bibitem[GHM14]{GoerssSplit}
Paul~G. Goerss, Hans-Werner Henn, and Mark~E. Mahowald.
\newblock The rational homotopy of the {$K(2)$}-local sphere and the chromatic
  splitting conjecture for the prime 3 and level 2.
\newblock {\em Doc. Math.}, 19:1271--1290, 2014.

\bibitem[GHMR05]{ghmr}
Paul~G. Goerss, Hans-Werner Henn, Mark~E. Mahowald, and Charles Rezk.
\newblock A resolution of the {$K(2)$}-local sphere at the prime 3.
\newblock {\em Ann. of Math. (2)}, 162(2):777--822, 2005.
\newblock URL: \url{https://doi.org/10.4007/annals.2005.162.777}.

\bibitem[GHMR15]{GHMRPIC}
Paul Goerss, Hans-Werner Henn, Mark Mahowald, and Charles Rezk.
\newblock On {H}opkins' {P}icard groups for the prime 3 and chromatic level 2.
\newblock {\em J. Topol.}, 8(1):267--294, 2015.
\newblock URL: \url{https://doi.org/10.1112/jtopol/jtu024}.

\bibitem[HKM13]{HKM}
Hans-Werner Henn, Nasko Karamanov, and Mark~E. Mahowald.
\newblock The homotopy of the {$K(2)$}-local {M}oore spectrum at the prime 3
  revisited.
\newblock {\em Math. Z.}, 275(3-4):953--1004, 2013.
\newblock URL: \url{https://doi.org/10.1007/s00209-013-1167-4}.

\bibitem[Lad13]{lader}
Olivier Lader.
\newblock {\em Une r\'esolution projective pour le second groupe de {M}orava
  pour $p \geq 5$ et applications}.
\newblock PhD thesis, University of Strasbourg, 2013.
\newblock URL: \url{http://www.theses.fr/2013STRAD028}.

\bibitem[MR09]{MR}
Mark~E. Mahowald and Charles Rezk.
\newblock Topological modular forms of level 3.
\newblock {\em Pure Appl. Math. Q.}, 5(2, Special Issue: In honor of Friedrich
  Hirzebruch. Part 1):853--872, 2009.
\newblock URL: \url{https://doi.org/10.4310/PAMQ.2009.v5.n2.a9}.

\bibitem[MRW77]{MRW}
Haynes~R. Miller, Douglas~C. Ravenel, and W.~Stephen Wilson.
\newblock Periodic phenomena in the {A}dams-{N}ovikov spectral sequence.
\newblock {\em Ann. of Math. (2)}, 106(3):469--516, 1977.
\newblock URL: \url{https://doi.org/10.2307/1971064}.

\bibitem[Rav78]{ravnovice}
Douglas~C. Ravenel.
\newblock A novice's guide to the {A}dams-{N}ovikov spectral sequence.
\newblock In {\em Geometric applications of homotopy theory ({P}roc. {C}onf.,
  {E}vanston, {I}ll., 1977), {II}}, volume 658 of {\em Lecture Notes in Math.},
  pages 404--475. Springer, Berlin, 1978.

\bibitem[Rav86]{ravgreen}
Douglas~C. Ravenel.
\newblock {\em Complex cobordism and stable homotopy groups of spheres}, volume
  121 of {\em Pure and Applied Mathematics}.
\newblock Academic Press, Inc., Orlando, FL, 1986.

\bibitem[Shi99]{shimmoore2}
Katsumi Shimomura.
\newblock The {A}dams-{N}ovikov {$E_2$}-term for computing {$\pi_*(L_2V(0))$}
  at the prime {$2$}.
\newblock {\em Topology Appl.}, 96(2):133--152, 1999.
\newblock URL: \url{https://doi.org/10.1016/S0166-8641(98)00048-0}.

\bibitem[Shi00]{shimmoore3}
Katsumi Shimomura.
\newblock The homotopy groups of the {$L_2$}-localized mod {$3$} {M}oore
  spectrum.
\newblock {\em J. Math. Soc. Japan}, 52(1):65--90, 2000.
\newblock URL: \url{https://doi.org/10.2969/jmsj/05210065}.

\bibitem[SW02a]{shimwangp2}
Katsumi Shimomura and Xiangjun Wang.
\newblock The {A}dams-{N}ovikov {$E_2$}-term for {$\pi_*(L_2S^0)$} at the prime
  2.
\newblock {\em Math. Z.}, 241(2):271--311, 2002.
\newblock URL: \url{https://doi.org/10.1007/s002090200415}.

\bibitem[SW02b]{shimwangp3}
Katsumi Shimomura and Xiangjun Wang.
\newblock The homotopy groups {$\pi_*(L_2S^0)$} at the prime 3.
\newblock {\em Topology}, 41(6):1183--1198, 2002.
\newblock URL: \url{https://doi.org/10.1016/S0040-9383(01)00033-7}.

\bibitem[SY95]{shimyab}
Katsumi Shimomura and Atsuko Yabe.
\newblock The homotopy groups {$\pi_*(L_2S^0)$}.
\newblock {\em Topology}, 34(2):261--289, 1995.
\newblock URL: \url{https://doi.org/10.1016/0040-9383(94)00032-G}.

\end{thebibliography}
\end{document}